\documentclass[11pt]{amsart}
\usepackage{graphicx}
\usepackage{caption}
\usepackage{subcaption}
\usepackage{amssymb}
\usepackage{amsmath}
\usepackage{amsthm}
\usepackage{bm}
\usepackage{placeins}
\usepackage{verbatim}
\usepackage{multirow}
\usepackage{longtable}
\usepackage{url}
\usepackage[all]{xy}
\usepackage{url}
\usepackage{array,booktabs}
\usepackage[width=0.9\textwidth]{caption}
\newtheorem{tw}{Theorem}[section]
\newtheorem{prop}[tw]{Proposition}
\newtheorem{lem}[tw]{Lemma}
\newtheorem{wn}[tw]{Corollary}

\theoremstyle{remark}
\newtheorem{uw}[tw]{Remark}

\theoremstyle{definition}

\newcommand{\bez}{\setminus}

\newcommand{\ro}{\varrho}

\newcommand{\gen}[1]{\langle #1 \rangle}
\newcommand{\biggen}[1]{\Big\langle #1 \Big\rangle}
\newcommand{\map}[3]{#1\colon #2\to #3}
\newcommand{\field}[1]{\mathbb{#1}}
\newcommand{\zz}{\field{Z}}

\newcommand{\st}{\;|\;}
\newcommand{\bigst}{\;\Big|\;}

%


\newcommand{\CondB}{(D3B)}

\newcommand{\TypeA}{A}
\newcommand{\TypeB}{B}

\begin{document}

\numberwithin{equation}{section}
\title{Roots of Dehn twists on nonorientable surfaces}

\author{Anna Parlak \hspace{1em}  Micha\l $\ $Stukow}


\address[]{
Institute of Mathematics, Faculty of Mathematics, Physics and Informatics, University of Gda\'nsk, 80-308 Gda\'nsk, Poland
}

\thanks{Both authors are supported by grant 2015/17/B/ST1/03235 of National Science Centre, Poland.}

\email{anna.parlak@gmail.com, trojkat@mat.ug.edu.pl}

\keywords{Mapping class group, nonorientable surface, Dehn twist, roots} 
\subjclass[2000]{Primary 57N05; Secondary 20F38, 57M99}

\begin{abstract}
Margalit and Schleimer observed that Dehn twists on orientable surfaces have nontrivial roots.  We investigate the problem of roots of a Dehn twist $t_c$ about a nonseparating circle $c$ in the mapping class group $\mathcal{M}(N_{g})$ of a nonorientable surface $N_g$ of genus $g$. We explore the existence of roots and, following the work of McCullough, Rajeevsarathy and Monden, give a simple arithmetic description of their conjugacy classes. We also study roots of maximal degree and prove that if we fix an odd integer $n>1$, then for each sufficiently large $g$, 
$t_c$ has a root of degree $n$ in $\mathcal{M}(N_{g})$. 
Moreover, for any possible degree $n$ we provide explicit expressions for a particular type of roots of Dehn twists about nonseparating circles in $N_g$. 
\end{abstract}


\maketitle%

	\section{Introduction}
	Let $N_{g, n}$ be a connected nonorientable surface of genus $g$ with $n$ boundary components, that is a surface obtained from a connected sum of $g$ projective planes $N_{g}$ by removing $n$ open disks. If $n$ equals zero, we omit it in notation. 
	The \emph{mapping class group} $\mathcal{M}(N_{g, n})$ consists of isotopy classes of self-homeomorphisms $h: N_{g,n}\rightarrow N_{g,n}$ such that $h$ is the identity on each boundary component.  The mapping class group $\mathcal{M}(S_{g,n})$ of a connected orientable surface of genus $g$ with $n$ boundary components is defined analogously, but we consider only orientation-preserving maps. 
	By abuse of notation, we identify a homeomorphism with its isotopy class. 
	
	Margalit and Schleimer showed that every Dehn twist about a nonseparating circle in $\mathcal{M}(S_{g+1})$, $g \geq 1$, has a root of degree $2g+1$ \cite{MargSchleim}. A natural question, arising as a consequence of this result, is a question about other possible degrees  of such roots.
	This research has been pursued by McCullough and Rajeevsarathy \cite{McCullough2010}. Using the theory of finite group actions on surfaces \cite[Chapter 13]{Thurston_pre} they derived necessary and sufficient conditions for the existence of a root of degree $n$ of a Dehn twist about a nonseparating circle in $\mathcal{M}(S_{g+1})$. They also gave an arithmetic description of conjugacy classes of such roots. Our aim is to conduct an analogous investigation in the nonorientable case, which is a natural continuation of our previous work in \cite{ParlakStukowRoots}.
	
	In Section \ref{sec:pre} we recollect some basic facts about Dehn twists on nonorientable surfaces. We prove that if a root of a Dehn twist about a nonseparating circle in $\mathcal{M}(N_g)$ exists, it must be of odd degree (Proposition \ref{Prop:odd:degree}). We also formulate and prove the 'trident relation' (Proposition \ref{tw:trident}) which is a generalization of both the chain and the star relation in $\mathcal{M}(S_g)$. We will make an extensive use of this relation in the final section of this paper.
	
	In Section \ref{sec:data}, following the work of McCullough and Rajeevsarathy \cite{McCullough2010}, we define a \emph{data set of degree $n$} which encodes the information required to describe a root of degree $n$ (Theorem \ref{tw:orb}). One of the main differences between the orientable and the nonorientable case is that when $g$ is even not all Dehn twists about nonseparating circles in $N_g$ are conjugate in $\mathcal{M}(N_g)$, thus we distinguish two types of data sets. The distinction is based on whether $N_g\bez c$ is orientable or not. 
	
	In Section \ref{sec:degrees} we study possible degrees of roots. In the orientable case it was proved \cite{McCullough2010, Monden-roots} that roots constructed by Margalit and Schleimer \cite{MargSchleim} were of maximal degree. In the case of nonorientable surfaces this problem is a bit more messy. For some genera $g$ we give a concrete information about the maximal possible degree of a root of a Dehn twist in $\mathcal{M}(N_g)$ and for others we evaluate upper and lower bounds for a possible maximal degree (Theorems \ref{tw:all:degs:a} and \ref{tw:max:b}). 
	
	In Section \ref{sec:primary} we define \emph{primary roots} of Dehn twists about nonseparating circles and we prove that if $g$ is large enough with respect to the fixed odd integer $n\geq 3$, then a Dehn twist about a nonseparating circle has a primary root of degree $n$ in $\mathcal{M}(N_g)$ (Theorems \ref{tw:allrots:a} and \ref{tw:allrots:b}). Primary roots are also of special interest in Section \ref{sec:geom:express}, where we give explicit expressions for a family of examples of such roots.  
	
%
	\section{Preliminaries}\label{sec:pre}
	We represent $g$ crosscaps on  $N_{g,n}$ by shaded disks (Figure \ref{fig_surface}) and enumerate them from 1 to $g$. 
	\begin{figure}[h]
		\begin{center}
			\includegraphics[width=0.74\textwidth]{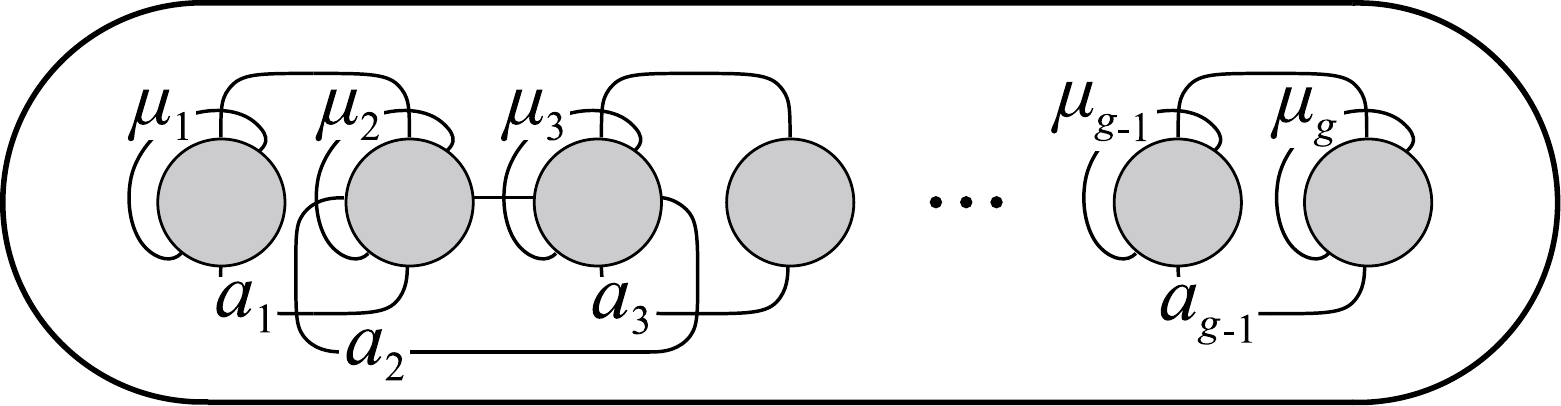}
			\caption{A nonorientable surface $N_g$.}
			\label{fig_surface} 
		\end{center}
	\end{figure}	
	By a \emph{circle} in $N_{g,n}$ we mean a simple closed curve in $N_{g,n}$ that is disjoint from the boundary $\partial N_{g,n}$. We identify a circle with its isotopy class. According to whether a regular neighbourhood of a circle is an annulus or a M\"obius strip, we call the circle \emph{two-sided} or \emph{one-sided} respectively. We say that a circle is \emph{generic} if it does not bound a disk nor a M\"obius strip. It is known (Corollary 4.5 of \cite{Stukow_twist}) that if $c$ is a generic two-sided circle on $N_g$, $g\geq 3$, then the Dehn twist $t_c$ is of infinite order in $\mathcal{M}(N_{g})$. On the other hand, if $g\leq 2$, then $\mathcal{M}(N_{g})$ is finite, hence we restrict our attention to the case $g\geq 3$ throughout the paper.
	On oriented surfaces we assume that Dehn twists twist to the left. On nonorientable surfaces there is no canonical choice of the orientation in a regular neighbourhood $S_a$ of a two-sided circle $a$, but after choosing one, we can define the \emph{Dehn twist} about $a$ in the usual way.    
	We indicate the chosen local  orientation by arrows on the two sides of the circle  which  point the direction of $t_a$, as it is presented in Figure \ref{fig:twist}. 
	\begin{figure}[h]
		\begin{center}
			\includegraphics[width=0.5\textwidth]{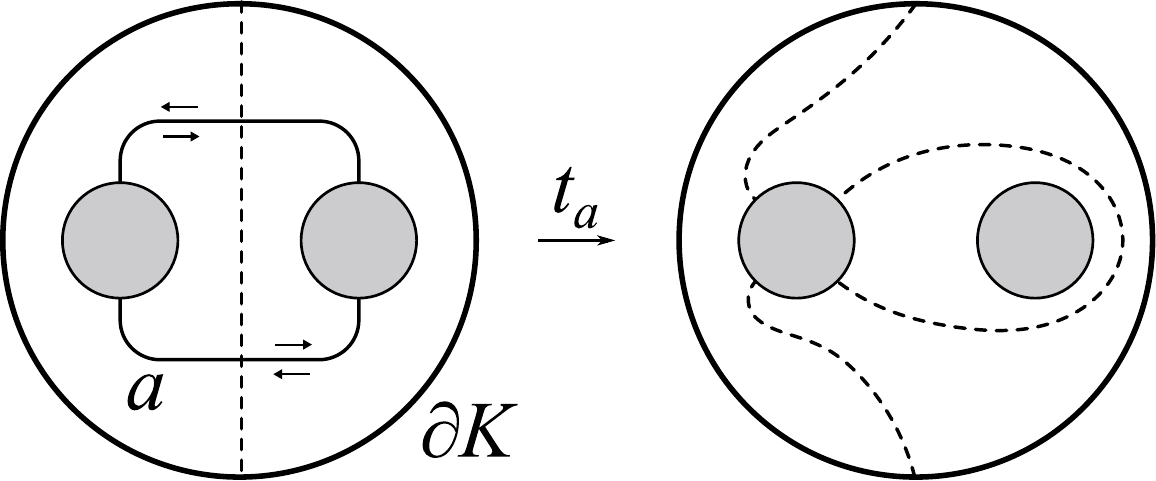}
			\caption{The Dehn twist on $K = N_{2,1}$ about the two-sided circle~$a$ passing through two crosscaps.}\label{fig:twist} 
		\end{center}
	\end{figure} 
	
	
	By $a_i$, $i = 1, \ldots, g-1$ we denote the two-sided circle passing once through each of the crosscaps labelled with $i$ and $i+1$ (Figure \ref{fig_surface}),  by $\mu_{j}$, $j = 1, \ldots, g$ we denote the one-sided circle passing once through the crosscap labelled with $j$. 
	If $g$ is even, then the circle $b$ passing once through each of the $g$ crosscaps on $N_g$ is two-sided and $N_g \bez b$ is orientable.

	If $c$ is a two-sided circle in $N_g$ such that $N_g \bez c=N_{g-2,2}$, then the Dehn twist $t_c$ is conjugate to $t_{a_1}$ (with some choice of orientations). We say that such a twist is \emph{of type A}. When $g$ is even, then there exists a second conjugacy class of twists about nonseparating circles, consisting of twists $t_c$ about circles $c$ for which $N_g \bez c = S_{\frac{g-2}{2},2}$. We say that such twists are \emph{of type B}. Clearly, $t_b$ is of type~B.

	
	Let us recall, that there exists a surjective representation \cite{Gadgil,Pinkal}
	\[\map{\psi}{\mathcal{M}(N_g)}{{\rm O}(H_1(N_g;\zz_2))},\]
	where $H_1(N_g;\zz_2)$ is a linear space with basis $\lbrack \mu_1 \rbrack, \ldots, \lbrack \mu_g \rbrack$, and ${\rm O}(H_1(N_g;\zz_2))$ are these linear automorphisms of $H_1(N_g;\zz_2)$ which preserve a $\zz_2$-valued intersection pairing $\langle\,,\rangle$ defined by $\langle\lbrack\mu_i\rbrack,\lbrack\mu_j\rbrack\rangle=\delta_{ij}$, $1\leqslant i,j\leqslant g$. 
	In particular, for any $f$ in $\mathcal{M}(N_g)$ 
		\begin{equation} \label{rep:hom:bi}
		\psi(f)^T\psi(f)=\psi(f)^TI_g\psi(f)=I_g\mod 2.
		\end{equation}
	The matrices corresponding to Dehn twists $t_{a_1}$ and $t_b$ (with respect to the basis $\lbrack \mu_1 \rbrack, \ldots, \lbrack \mu_g \rbrack$) are as follows \cite{Stukow_homolTopApp}:	
	\begin{displaymath}
		\psi(t_{a_1}) =  
		\begin{bmatrix}
			0 & 1  \\
			1 & 0  \\
		\end{bmatrix}
		\oplus I_{g-2}
	\end{displaymath}
	\begin{displaymath}
		\psi(t_{b}) = I_{g} + 
		\begin{bmatrix}
			1 & 1 & 1 & \ldots & 1  \\
			1 & 1 & 1 & \ldots & 1  \\
			\vdots & \vdots & \vdots & \ddots & \vdots\\
			1 & 1 & 1 & \ldots & 1  \\
		\end{bmatrix}_{g} 
	\end{displaymath}
	
	where $I_k$ is the identity matrix of rank $k$.
	
	The following proposition and its proof is a nonorientable version of the result of Monden (Section 5 of \cite{Monden-roots}). 
	\begin{prop}\label{Prop:odd:degree}
		Let $c$ be a nonseparating two-sided circle in a nonorientable surface $N_g$, $g\geq 3$, and let $f\in \mathcal{M}(N_g)$ be such that $f^n=t_c$ for some positive integer $n$. Then $n$ is odd.
	\end{prop}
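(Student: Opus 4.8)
The plan is to exhibit a group homomorphism $\delta\colon\mathcal{M}(N_g)\to\zz_2$ whose value on $t_c$ is the nontrivial element; then $f^n=t_c$ instantly gives $1=\delta(t_c)=\delta(f^n)=n\cdot\delta(f)$ in $\zz_2$, which is impossible for even $n$. (Note that this route does not even require knowing that $f$ preserves $c$, although $f$ does preserve $c$ — it commutes with $t_c$ — and that fact would be the starting point of a more geometric alternative argument.) The homomorphism will be $\delta=D\circ\psi$, where $\psi$ is the $\zz_2$-homology representation from above and $D\colon{\rm O}(H_1(N_g;\zz_2))\to\zz_2$ is defined by $D(A)=\operatorname{rank}_{\zz_2}(A-I_g)\bmod 2$. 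From the explicit matrices, $\psi(t_{a_1})-I_g=\left[\begin{smallmatrix}1&1\\1&1\end{smallmatrix}\right]\oplus 0$ and $\psi(t_b)-I_g$ is the all-ones matrix, each of rank $1$; hence $D(\psi(t_{a_1}))=D(\psi(t_b))=1$, and since a Dehn twist about any nonseparating two-sided circle is conjugate to $t_{a_1}$ (type $\TypeA$) or, for $g$ even, to $t_b$ (type $\TypeB$), we get $\delta(t_c)=1$.

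The content of the proof is that $D$ really is a homomorphism. On all of $\operatorname{GL}_g(\zz_2)$ the function $A\mapsto\operatorname{rank}(A-I_g)\bmod2$ is \emph{not} multiplicative; it becomes so on the orthogonal subgroup, and this is exactly where \eqref{rep:hom:bi} is used. I would first check that every $A\in{\rm O}(H_1(N_g;\zz_2))$ fixes the ``all ones'' vector $\mathbf{1}=\lbrack\mu_1\rbrack+\dots+\lbrack\mu_g\rbrack$: indeed $\mathbf 1$ is characterized intrinsically as the unique nonzero $u$ with $u^{\perp}$ equal to the set of isotropic vectors, a set which $A$ preserves because $\langle Av,Av\rangle=\langle v,v\rangle$. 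One then passes to $\mathbf 1^{\perp}$ (or to $\mathbf 1^{\perp}/\langle\mathbf 1\rangle$ when $g$ is even), on which $\langle\,,\rangle$ restricts to a nondegenerate alternating form, equipped with the quadratic refinement $v\mapsto\binom{\operatorname{wt}(v)}{2}\bmod 2$; the point is that $A$ respects enough of this data that $D(A)$ coincides with the Dickson invariant of the corresponding element of a genuine orthogonal group of a quadratic $\zz_2$-form, which is a homomorphism. This identification of ${\rm O}(H_1(N_g;\zz_2),I_g)$ with (a subgroup of) a Dickson‑type orthogonal group is the main technical step; everything else is bookkeeping. This is precisely the nonorientable analogue of Monden's Section~5 argument, where in the orientable case the analogous role is played by the symplectic transvection structure over $\zz$.

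If one prefers to mirror Monden's proof even more closely, the alternative is: assume $n=2m$, so $\psi(t_c)=\psi(f^{m})^{2}$ is a square in ${\rm O}(H_1(N_g;\zz_2))$, and show directly that $\psi(t_c)=I_g+N$ with $N=\psi(t_c)-I_g$ symmetric of rank $1$ and $N^{2}=0$ cannot be a square there. Writing a hypothetical square root as $M$, one notes $M^{2}=I_g+N$ is unipotent, hence (over $\overline{\zz_2}$) $M$ is unipotent, say $M=I_g+P$ with $P$ nilpotent; then over $\zz_2$ one has $N=(I_g+P)^2-I_g=P^{2}$, so $P$ has exactly one Jordan block of size $3$ and all remaining blocks of size $\le 2$. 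The obstacle — and the heart of the matter — is to rule this out using \eqref{rep:hom:bi}, i.e.\ to show that the orthogonal group of $(H_1(N_g;\zz_2),I_g)$ contains no unipotent element with an odd‑sized Jordan block; small genera $g=3,4,5,6$ can be checked by hand, and the general case follows from the same quadratic‑refinement analysis as in the paragraph above. In either formulation, the delicate part is understanding the $2$-structure of ${\rm O}(H_1(N_g;\zz_2),I_g)$ well enough to see its canonical $\zz_2$-quotient; the deduction of ``$n$ odd'' from the existence of that quotient is immediate.
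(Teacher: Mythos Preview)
Your proposed homomorphism $D\colon{\rm O}(H_1(N_g;\zz_2))\to\zz_2$, $A\mapsto\operatorname{rank}_{\zz_2}(A-I_g)\bmod 2$, is \emph{not} a homomorphism, and the argument collapses at exactly this point. Take $g=4$, $A=\psi(t_b)=I_4+J$ (with $J$ the all-ones matrix) and $B=\psi(t_{a_1}t_{a_3})=P_{(12)(34)}$. Then $A-I_4=J$ has rank $1$, so $D(A)=1$; $B-I_4$ has columns $e_1+e_2,\,e_1+e_2,\,e_3+e_4,\,e_3+e_4$, rank $2$, so $D(B)=0$. But $(AB-I_4)e_j=e_{\sigma(j)}+\mathbf{1}+e_j$ gives columns $e_3+e_4,\,e_3+e_4,\,e_1+e_2,\,e_1+e_2$, again rank $2$, so $D(AB)=0\neq 1=D(A)+D(B)$. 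Since $\psi$ is surjective, both $A$ and $B$ lie in the image, so there is no way to rescue $D\circ\psi$ as a homomorphism on $\mathcal{M}(N_g)$.

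The Dickson-invariant identification you sketch is also where the error lives, not just the bookkeeping. For even $g$ the transvection $T_{\mathbf 1}=\psi(t_b)$ acts trivially on $\mathbf 1^{\perp}$ (if $v\in\mathbf 1^{\perp}$ then $T_{\mathbf 1}(v)=v+\langle v,\mathbf 1\rangle\mathbf 1=v$), hence trivially on $\mathbf 1^{\perp}/\langle\mathbf 1\rangle$, so its Dickson invariant on the quotient is $0$, not the $1$ your $D$ assigns. Relatedly, the quadratic refinement $q(v)=\binom{\operatorname{wt}(v)}{2}$ is \emph{not} preserved by ${\rm O}(I_g,\zz_2)$ in general (e.g.\ $q(e_1)=0$ but $q(T_{\mathbf 1}e_1)=q(\mathbf 1+e_1)=\binom{3}{2}=1$ for $g=4$), so the passage to a genuine orthogonal group of a quadratic form does not go through as stated. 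Your alternative route likewise defers the key step (``rule out a size-$3$ Jordan block using \eqref{rep:hom:bi}'') to the same unproven structural claim. By contrast, the paper's argument avoids all of this: it uses $P^TP=I$ to rewrite $P^2=\psi(t_c)$ as $\psi(t_c)P^T=P$ and reads off directly, from the explicit shape of $\psi(t_{a_1})$ and $\psi(t_b)$, that any such $P$ would be symmetric, forcing $\psi(t_c)=PP^T=I$, a contradiction.
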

	\begin{proof}
		It is enough to show that there are no roots of degree 2 of Dehn twists  $t_{a_1}$ and $t_b$ (the latter occurs only when $g$ is even).  
		
		Let $A = \psi(t_{a_1})$ and denote the corresponding matrix of a square root of $t_{a_1}$ by $P = (p_{ij})_{i,j=1, \ldots, g}$. By relation \eqref{rep:hom:bi}
		\[AP^T=P^2P^{-1}=P \mod 2.\]
		Since $A$ is a permutation matrix that interchanges the first two rows of $P^T$, the above equality implies that 
		\begin{enumerate}
			\item $p_{jk}=p_{kj}$ if $\{j,k\}\neq \{1,2\}$,
			\item $p_{12}=p_{11}=p_{21}$.
 		\end{enumerate}
		In particular, $P$ is symmetric. But then
		\[A=P^2=P\cdot P=P\cdot P^T=P\cdot P^{-1}=I_g\mod 2\]
		which is a contradiction.
					
		
		Let $B = \psi(t_b)$ and denote by $R = (r_{ij})_{i,j = 1, \ldots, g}$ the corresponding matrix of the square root of $t_b$. As before we get 
		\[BR^T = R \mod 2.\]
		If we compare the entries on the diagonals of both sides, we get
		$\sum_{k=1}^g r_{jk}=0$ for $j=1,\ldots,g$ (that is each column of $R^T$ sums up to 0). But this implies that 
		\[R=BR^T=\left(I_g+\begin{bmatrix}
			1 & 1  & \ldots & 1  \\			
			\vdots &  \vdots & \ddots & \vdots\\
			1 & 1 & \ldots & 1  \\
		\end{bmatrix}_{g}\right)R^T=I_gR^T + \begin{bmatrix}
		0 & 0  & \ldots & 0  \\			
		\vdots &  \vdots & \ddots & \vdots\\
		0 & 0 & \ldots & 0  \\
		\end{bmatrix}_{g}=R^T.\]
		Thus $R$ is symmetric and as in the case of a square root of $t_{a_1}$, we get a contradiction.		
	\end{proof}
	In Section \ref{sec:geom:express} we will use several relations that hold in the mapping class groups to construct examples of roots of Dehn twists. 
	\begin{prop}[Chain relation {\cite[Proposition 4.12]{MargaliFarb}}]\label{chain}
		Let $a_1, \ldots, a_k$, $k\geq~1$, be a chain of circles in an orientable surface $S$. If $k$ is odd, then the boundary of a closed regular neighbourhood of $\bigcup\limits_{i=1}^{k}a_i$
		consists of two circles $c_1$ and $c_2$ and 
		\[(t_{a_1}^2t_{a_2}\cdots t_{a_k})^k = t_{c_1}t_{c_2}. \]
		If $k$ is even, the boundary of a closed regular neighbourhood of $\bigcup\limits_{i=1}^{k}a_i$
		consists of one circle $c$ and
		\[\left((t_{a_1}\cdots t_{a_k})^2\right)^{k+1} = t_c. \]
	\end{prop}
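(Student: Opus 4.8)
The plan is to reduce the statement to its model surface, read off the topology from the hyperelliptic branched double cover of a disc, and then finish with a short computation inside a braid group. First I would pass to $S=\widetilde{\Sigma}:=N\!\left(\bigcup_{i=1}^k a_i\right)$, a closed regular neighbourhood of the chain: the inclusion $\widetilde{\Sigma}\hookrightarrow S$ induces a homomorphism $\mathcal{M}(\widetilde{\Sigma})\to\mathcal{M}(S)$ sending $t_{a_i}$, $t_{c_1}$, $t_{c_2}$, $t_c$ to the equally named twists, so it suffices to prove both identities in $\mathcal{M}(\widetilde{\Sigma})$. I would then present $\widetilde{\Sigma}$ as the connected double cover of a disc $D$ branched over $k+1$ collinear points $p_0,\dots,p_k$, given by the monodromy $\pi_1(D\smallsetminus\{p_i\})\to\zz_2$ sending each small loop about a $p_i$ to $1$. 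By Riemann--Hurwitz $\chi(\widetilde{\Sigma})=2-(k+1)=1-k$, and since $\partial D$ carries monodromy $k+1\bmod 2$, its preimage $\partial\widetilde{\Sigma}$ consists of two circles $c_1,c_2$ when $k$ is odd and of a single circle $c$ (double covering $\partial D$) when $k$ is even; this already yields the count of boundary components asserted in the statement and identifies $\widetilde{\Sigma}$ with the surface of genus $\lfloor(k-1)/2\rfloor$ of that type.

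By the Birman--Hilden correspondence, with the local orientation conventions fixed in the statement, the half-twist $\sigma_i\in\mathcal{M}(D,\{p_j\})\cong B_{k+1}$ exchanging $p_{i-1}$ and $p_i$ lifts to the Dehn twist $t_{a_i}$ about the preimage of an arc joining $p_{i-1}$ to $p_i$; consecutive such preimages meet once and non-consecutive ones are disjoint, so they form a chain which we may take to be the given one. Since the $t_{a_i}$ satisfy the braid relations, $\sigma_i\mapsto t_{a_i}$ extends to a homomorphism $\varphi\colon B_{k+1}\to\mathcal{M}(\widetilde{\Sigma})$ agreeing with this lifting. Standard Birman--Hilden bookkeeping then gives that the generator of the centre $Z(B_{k+1})$, namely the full twist $\Delta^2=(\sigma_1\cdots\sigma_k)^{k+1}$, satisfies $\varphi(\Delta^2)=t_{c_1}t_{c_2}$ when $k$ is odd, while $\varphi(\Delta^4)=t_c$ when $k$ is even.

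It then remains to identify the words appearing in the statement with $\Delta^2$, resp.\ $\Delta^4$, inside $B_{k+1}$. Writing $\delta=\sigma_1\cdots\sigma_k$, the braid relations give $\delta\sigma_i=\sigma_{i+1}\delta$ for $1\le i\le k-1$ (slide $\sigma_i$ to the left using $\sigma_i\sigma_{i+1}\sigma_i=\sigma_{i+1}\sigma_i\sigma_{i+1}$ and commutation of non-adjacent generators), hence $\delta^{j}\sigma_1\delta^{-j}=\sigma_{j+1}$ for $0\le j\le k-1$, and telescoping yields
\[(\sigma_1^2\sigma_2\cdots\sigma_k)^k=(\sigma_1\delta)^k=\sigma_1(\delta\sigma_1\delta^{-1})\cdots(\delta^{k-1}\sigma_1\delta^{-(k-1)})\,\delta^k=\delta\cdot\delta^k=\Delta^2.\]
Applying $\varphi$ gives $(t_{a_1}^2 t_{a_2}\cdots t_{a_k})^k=t_{c_1}t_{c_2}$ when $k$ is odd. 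When $k$ is even, $\big((\sigma_1\cdots\sigma_k)^2\big)^{k+1}=\Delta^4$ trivially, and applying $\varphi$ gives $\big((t_{a_1}\cdots t_{a_k})^2\big)^{k+1}=t_c$.

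The genuinely delicate point is not the braid computation, which is short, but the Birman--Hilden bookkeeping used above: verifying that the preferred lift of $\sigma_i$ is $t_{a_i}$ rather than $t_{a_i}^{-1}$, and that the full twist and its square lift to $t_{c_1}t_{c_2}$ and to $t_c$ respectively (rather than to inverses or other powers), so that all signs and exponents in the final identities come out exactly as stated; these facts are classical and may simply be quoted, or checked directly in the annular model near $\partial D$. One can also bypass branched covers entirely and argue by hand, exhibiting an order-$(k+1)$ ``rotation'' of a planar model of $\widetilde{\Sigma}$ that cyclically permutes the $a_i$ and induces the relation directly, at the cost of a longer picture-chase. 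Since this is exactly the classical chain relation, in practice I would simply invoke \cite[Proposition~4.12]{MargaliFarb}.
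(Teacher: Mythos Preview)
The paper does not prove this proposition at all: it is stated with a citation to \cite[Proposition~4.12]{MargaliFarb} and used as a black box, so there is no ``paper's own proof'' to compare against. Your sketch is a correct outline of the standard Birman--Hilden argument (and is essentially the proof given in the cited reference): the reduction to the regular neighbourhood, the identification with the hyperelliptic double cover of a disc branched at $k+1$ points, the Riemann--Hurwitz and monodromy computation of $\partial\widetilde{\Sigma}$, the lifting $\sigma_i\mapsto t_{a_i}$, and the braid identity $(\sigma_1\delta)^k=\delta^{k+1}=\Delta^2$ are all right. Your own closing remark---that in practice one simply invokes \cite[Proposition~4.12]{MargaliFarb}---is exactly what the paper does.
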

	\begin{prop}[Star relation \cite{Gervais_top}]\label{star}
		Let $c_1, c_2, c_3$ denote the three boundary components of a surface $S_{1,3}$. Let $a_0, a_1,a_2,a_3$ be circles as in Figure \ref{fig:star}.
		\begin{figure}[h]
			\begin{center}
				\includegraphics[width=0.29\textwidth]{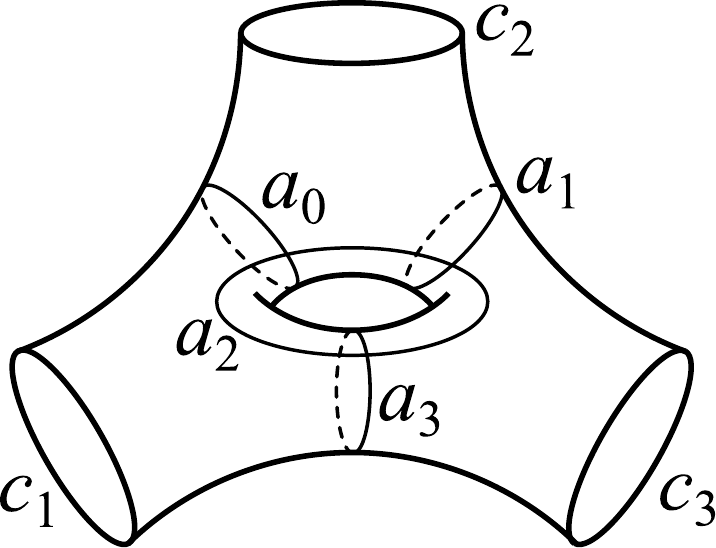}
				\caption{Circles defining the star relation.}\label{fig:star} 
			\end{center}
		\end{figure}
		Then
		\[(t_{a_0}t_{a_1}t_{a_2}t_{a_3})^3 = t_{c_1}t_{c_2}t_{c_3}.\]
	\end{prop}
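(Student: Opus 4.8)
The star relation is classical (it is due to Gervais), but here is how I would prove it from scratch. The plan is to work inside $S:=S_{1,3}$ and combine the Alexander method with the evident $3$-fold symmetry of the configuration in Figure~\ref{fig:star}. Concretely, $S$ admits an orientation-preserving homeomorphism $\rho$ of order $3$ with $\rho(a_0)=a_0$, $\rho(a_i)=a_{i+1}$ and $\rho(c_i)=c_{i+1}$ (indices mod~$3$); $\rho$ permutes the boundary circles, but conjugation by $\rho$ still carries $\mathcal{M}(S)$ to itself, with $\rho t_{a_0}\rho^{-1}=t_{a_0}$, $\rho t_{a_i}\rho^{-1}=t_{a_{i+1}}$ and $\rho t_{c_i}\rho^{-1}=t_{c_{i+1}}$. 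Put $g:=t_{a_0}t_{a_1}t_{a_2}t_{a_3}$. Since $a_1,a_2,a_3$ are pairwise disjoint, $t_{a_1},t_{a_2},t_{a_3}$ commute, so $\rho g\rho^{-1}=t_{a_0}t_{a_2}t_{a_3}t_{a_1}=g$; likewise $\rho(t_{c_1}t_{c_2}t_{c_3})\rho^{-1}=t_{c_1}t_{c_2}t_{c_3}$. Both $g$ and $t_{c_1}t_{c_2}t_{c_3}$ are the identity on $\partial S$.

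Next I would choose a $\rho$-invariant collection $\Delta$ of properly embedded arcs that \emph{fills} $S$, i.e.\ such that $S$ cut along $\Delta$ is a union of disks (one can take $\Delta$ to be a union of $\rho$-orbits of arcs). A mapping class that is the identity on $\partial S$ and fixes each arc of a filling arc system up to isotopy rel $\partial S$ is trivial --- there is no Dehn-twisting ambiguity along arcs --- so the proposition will follow once $g^3(\delta)$ is shown to be isotopic rel $\partial S$ to $(t_{c_1}t_{c_2}t_{c_3})(\delta)$ for every $\delta\in\Delta$. Moreover $g^3$ and $t_{c_1}t_{c_2}t_{c_3}$ both commute with $\rho$, so if this isotopy holds for one $\delta$ it holds for every arc in its $\rho$-orbit; hence the verification comes down to a single representative arc per orbit.

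For such an arc $\delta$ the computation is finite and explicit: apply $t_{a_3},t_{a_2},t_{a_1},t_{a_0}$ in turn by successive arc surgeries to produce $g(\delta)$, iterate twice more to get $g^3(\delta)$, and compare it with the arc obtained from $\delta$ by dragging each endpoint once around the boundary component it lies on, which is exactly $(t_{c_1}t_{c_2}t_{c_3})(\delta)$. I expect this surgery bookkeeping --- carried out consistently with the twisting conventions of Figure~\ref{fig:twist} --- to be the only genuinely laborious step; the $\rho$-reduction above is what keeps it bounded.

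If one prefers to minimize drawing, the arithmetic input can instead be isolated in the quotient $S/\rho\cong S_{1,1}$ of the free $\zz/3$-action: there $g$ descends to $t_{\bar a_0}^{\,3}t_{\bar b}\in\mathcal{M}(S_{1,1})$, where $\bar b$ is the common image of $a_1,a_2,a_3$ and meets $\bar a_0$ once. Using $\mathcal{M}(S_{1,1})\cong B_3$, one checks that $(t_{\bar a_0}^{\,3}t_{\bar b})^{3}$ maps to the identity of ${\rm SL}(2,\zz)$ (it is the cube of a matrix of trace~$-1$) and to $12$ in $H_1(B_3)\cong\zz$, while $\ker(B_3\to{\rm SL}(2,\zz))=\langle t_\partial\rangle$ with $t_\partial=(t_{\bar a_0}t_{\bar b})^{6}$ (the chain relation, Proposition~\ref{chain}) also mapping to $12$; hence $(t_{\bar a_0}^{\,3}t_{\bar b})^{3}=t_\partial$. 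Pulling this back up the cover --- where $t_\partial$ lifts to $t_{c_1}t_{c_2}t_{c_3}$ --- would recover the star relation, but keeping track of boundary twists through a \emph{free} cover, so that no extra deck transformation or boundary factor slips in, is itself delicate; that is why I would carry out the direct arc argument instead, whose only cost is the explicit surgery computation of the third paragraph.
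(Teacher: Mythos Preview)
The paper does not actually prove Proposition~\ref{star}; it merely states the star relation with a citation to Gervais. What the paper \emph{does} prove is the trident relation (Proposition~\ref{tw:trident}), whose case $g=1$ is literally the star relation, so that argument specializes to a proof here.

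Comparing your strategy to that one: both are Alexander--Method arguments, but the bookkeeping is organized quite differently. You exploit the order--$3$ symmetry $\rho$ of the star configuration so that $g=t_{a_0}t_{a_1}t_{a_2}t_{a_3}$ and $t_{c_1}t_{c_2}t_{c_3}$ are $\rho$--equivariant, reducing the arc check to one representative per $\rho$--orbit; the remaining work is a single explicit triple iteration of four twists on an arc, which you outline but do not carry out. The paper instead finds circles and arcs on which $T=t_{a_0}t_{a_1}\cdots t_{a_{2g+1}}$ acts as a \emph{shift} (e.g.\ $T(a_k)=a_{k+1}$, $T(\lambda_k)=\lambda_{k+1}$), so that $T^{2g+1}$ visibly fixes each of them, and then compares with the boundary twists directly. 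The paper's device has the advantage of working uniformly for all $g$ and of replacing an iterated surgery computation by a one--step shift check; your symmetry reduction is conceptually clean but specific to $g=1$, and still leaves the concrete surgery on the representative arc to be done. Your alternative route through the quotient $S/\rho\cong S_{1,1}$ and $B_3$ is suggestive, but as you yourself flag, controlling the boundary--twist bookkeeping through the cover is delicate and you do not complete it; so your proof, as written, rests on the direct arc verification, which is correct in outline but not executed.
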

	The following relation is a common generalization of both the chain and the star relations.
	\begin{prop}[Trident relation]\label{tw:trident}
		Let $c_1,c_2,c_3,a_0,a_1,\ldots,a_{2g+1}$ be circles in an oriented surface $S_{g,3}$ of genus  $g\geq 1$ with three boundary components as in Figure \ref{fig_trident}. 
		\begin{figure}[h]
			\begin{center}
				\includegraphics[width=0.7\textwidth]{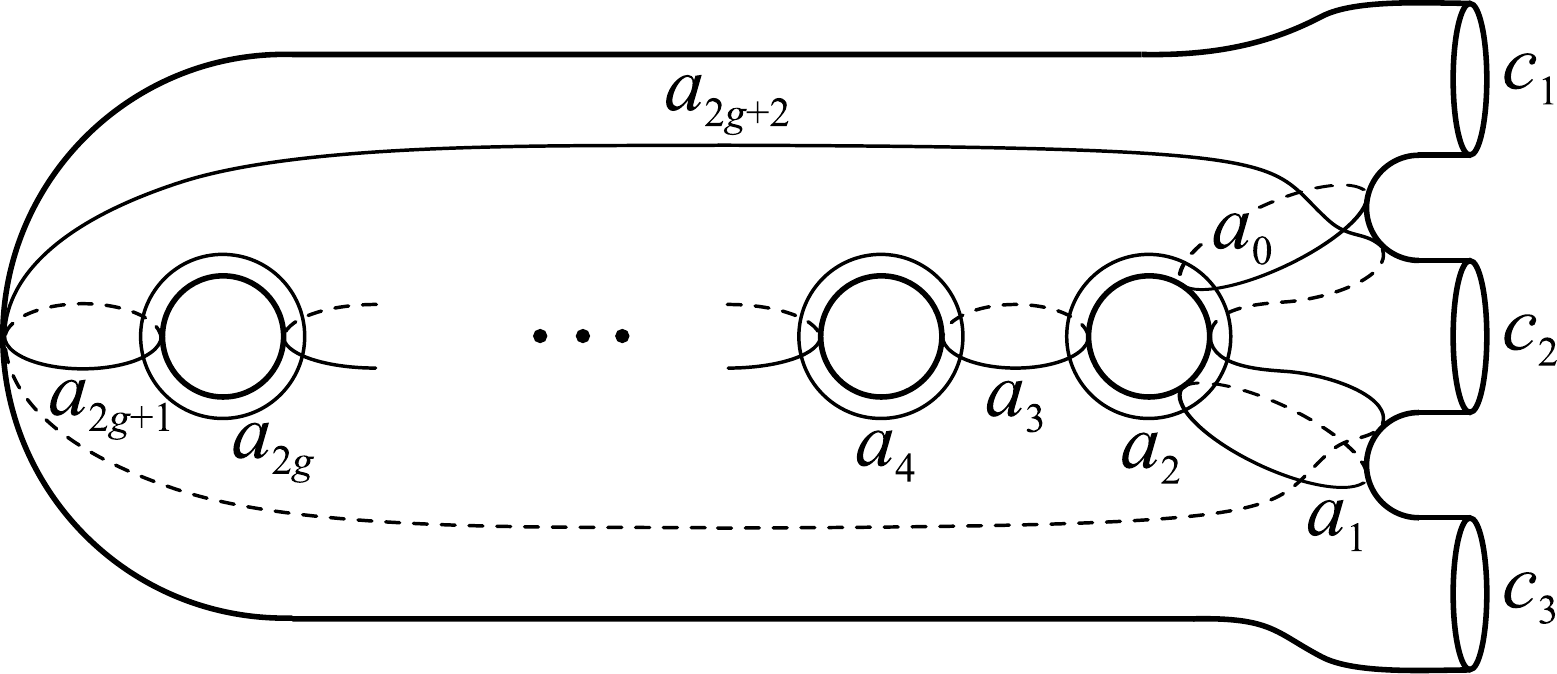}
				\caption{Circles defining the trident relation.}\label{fig_trident} 
			\end{center}
		\end{figure}
		Then
		\[(t_{a_0}t_{a_1}t_{a_2}\cdots t_{a_{2g+1}})^{2g+1}=t_{c_1}t_{c_2}^gt_{c_3}.\]
	\end{prop}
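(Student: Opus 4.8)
The plan is to prove the trident relation by realizing $S_{g,3}$ with the chain $a_0,a_1,\dots,a_{2g+1}$ inside a larger closed orientable surface and reducing to the two known relations in the excerpt (the chain relation, Proposition~\ref{chain}, and the star relation, Proposition~\ref{star}). First I would set up coordinates: inspect Figure~\ref{fig_trident} carefully to see that the subsurface filled by $a_0,a_1,\dots,a_{2g+1}$ together with the three boundary circles $c_1,c_2,c_3$ is indeed $S_{g,3}$, with $c_2$ (the curve appearing with multiplicity $g$) being the boundary component ``opposite'' the branching, and $a_0$ the ``central'' curve analogous to the one in the star relation. I would record the intersection pattern: the chain $a_1,a_2,\dots,a_{2g+1}$ is linear, while $a_0$ meets exactly one of them (say $a_1$) transversely once and is disjoint from the rest, exactly as $a_0$ meets only one chain element in the star picture.

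The main step is an induction on $g$. The base case $g=1$ is precisely the star relation (Proposition~\ref{star}): there $a_0,a_1,a_2,a_3$ sit in $S_{1,3}$, and $(t_{a_0}t_{a_1}t_{a_2}t_{a_3})^3 = t_{c_1}t_{c_2}t_{c_3} = t_{c_1}t_{c_2}^1 t_{c_3}$. For the inductive step I would cut $S_{g,3}$ along a separating curve that splits off a copy of $S_{1,?}$ supporting $a_0,a_1,a_2,a_3$ from an $S_{g-1,?}$ supporting the tail $a_4,\dots,a_{2g+1}$, glued along a curve; then I would combine the genus-$(g-1)$ trident relation with a chain relation for the two newly-adjacent chain curves, using the standard ``lantern-free'' manipulation of commuting/braiding twist words. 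Concretely, I expect to use that $(t_{a_0}t_{a_1}\cdots t_{a_{2g+1}})^{2g+1}$ can be rewritten, via the chain relation applied to the odd-length subchain $a_{2k},a_{2k+1}$ or via conjugation identities $t_{a}t_{b}t_a = t_b t_a t_b$ for $a,b$ meeting once, as a product of the genus-$(g-1)$ trident word and an extra $t_{c_2}$ factor, which accounts for the exponent $g$ on $c_2$ growing by one at each stage.

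Alternatively — and this is probably the cleaner route — I would avoid induction and instead embed $S_{g,3}$ in a closed surface $\Sigma$ by capping $c_1,c_3$ with disks and $c_2$ with a surface, arranging that $a_0,a_1,\dots,a_{2g+1}$ becomes a chain of length $2g+2$ (even) or $2g+1$ (odd) in $\Sigma$ so that Proposition~\ref{chain} applies directly; tracking which boundary curves survive the capping and with what multiplicity then yields $t_{c_1}t_{c_2}^{g}t_{c_3}$ on the right-hand side. The bookkeeping of multiplicities is where the exponent $g$ must be seen to emerge, and getting the orientations/handedness of all twists consistent with the conventions fixed earlier (``twist to the left'') is a routine but error-prone part.

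The hard part will be the multiplicity bookkeeping: verifying that $c_2$ genuinely appears to the $g$-th power and not, say, to the first power or to the power $2g$. I expect this to come down to computing how the regular neighbourhood of $\bigcup a_i$ sits relative to $c_2$ — the curve $c_2$ should be isotopic to a boundary component of that neighbourhood that ``wraps'' $g$ times, or equivalently to $g$ parallel copies merged by the capping — and confirming it either by a direct homological computation (the class of the left-hand side acting on $H_1$, compared against $t_{c_1}t_{c_2}^{g}t_{c_3}$) or by specializing to small $g$ and matching against the star relation ($g=1$) and against the chain relation (when the $a_0$-branch is collapsed). A homological sanity check using the representation $\psi$ from the excerpt, or the analogous symplectic representation on the orientable double, would be the quickest way to pin down the exponent before committing to the full topological argument.
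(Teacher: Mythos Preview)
Your plan is a genuinely different route from the paper's, and as written it has real gaps. The paper does not reduce to the chain and star relations at all: it sets $T=t_{a_0}t_{a_1}\cdots t_{a_{2g+1}}$, verifies by direct inspection that $T$ cyclically permutes the curves $a_2,\dots,a_{2g+2}$ and certain arcs $\lambda_i,\mu_i$, reads off from these permutations that $T^{2g+1}$ and $t_{c_1}t_{c_2}^{g}t_{c_3}$ agree on a filling collection of curves and arcs, and then invokes the Alexander Method. The exponent $g$ on $t_{c_2}$ drops out cleanly from the computation $T(H^{-1}T)^{2g}(\mu_1)=\mu_1$ with $H$ a half-twist about $c_2$, so $T^{2g+1}(\mu_1)=H^{2g}(\mu_1)=t_{c_2}^{g}(\mu_1)$; there is no induction and no capping.

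Your inductive approach has a concrete obstacle: the word $(t_{a_0}\cdots t_{a_{2g+1}})^{2g+1}$ does not visibly factor through a genus-$(g-1)$ trident word, because both the length of the product and the outer exponent change with $g$, and braid relations alone will not untangle this---you would need a substantial combinatorial argument you have not supplied. Your capping approach also falls short as stated: capping $c_1$ or $c_3$ kills $t_{c_1}$ or $t_{c_3}$, so you cannot read their exponents from the capped surface, and after capping $c_2$ the configuration $a_0,a_1,\dots,a_{2g+1}$ is still a branched chain, not a linear one, so Proposition~\ref{chain} does not apply directly. A homological check will confirm the exponent $g$ but cannot prove the relation, since the Torelli group is large. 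An algebraic derivation does exist (the paper remarks that the result follows from Proposition~2.12 of Labru\`ere--Paris), but your sketch does not get there; the Alexander-Method argument is both shorter and self-contained.
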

	\begin{proof}
		Let $T=t_{a_0}t_{a_1}t_{a_2}\cdots t_{a_{2g+1}}$ and let $a_{2g+2}$ be as in Figure \ref{fig_trident}. 
		It is straightforward to check that $T(a_k)=a_{k+1}$ for $k=2,3,\ldots, 2g+1$, and $T(a_{2g+2})=a_2$. In particular,
		\begin{equation}\label{eq:trident:a}
			T^{2g+1}(a_k)=a_k=t_{c_1}t_{c_2}^gt_{c_3}(a_k),\text{ for $k=2,3,\ldots,2g+1$}.\end{equation}
		Now let $\lambda_1,\lambda_2,\ldots,\lambda_{2g+2}$ and $\mu_1,\mu_2,\ldots,\mu_{2g+1}$ be arcs as in Figure \ref{fig_trident2}.
		\begin{figure}[h]
			\begin{center}
				\includegraphics[width=1\textwidth]{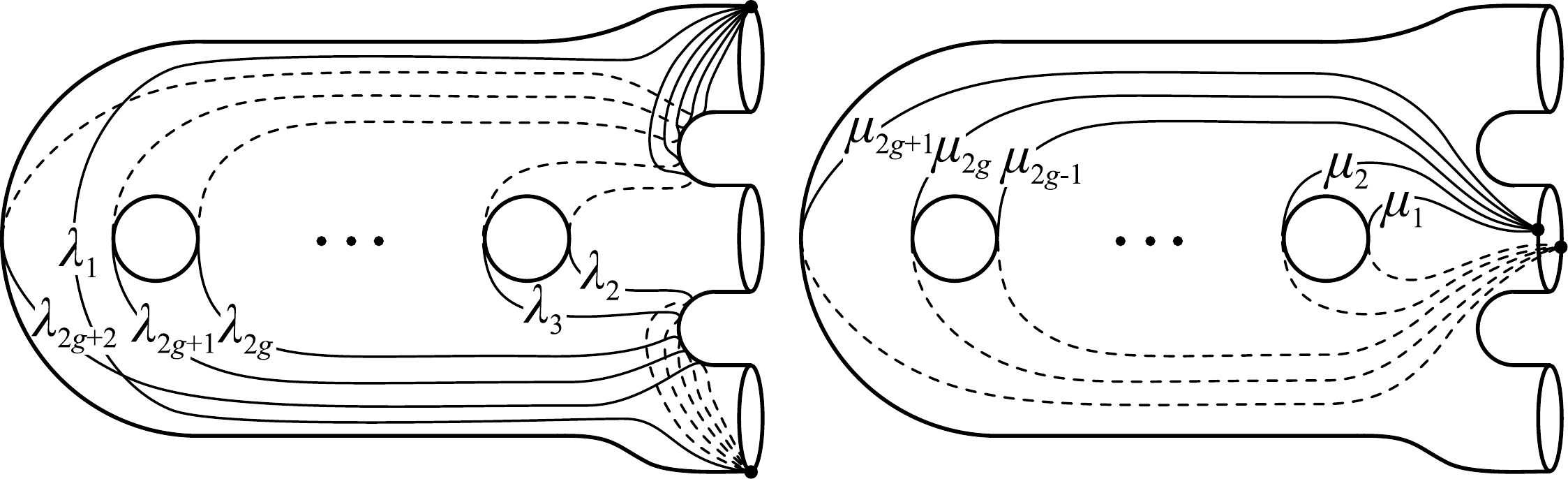}
				\caption{Arcs $\lambda_i$ and $\mu_i$ -- the proof of the trident relation.}\label{fig_trident2} 
			\end{center}
		\end{figure}   
		Then $T(\lambda_k)=\lambda_{k+1}$ for $k=1,2,\ldots,2g+1$. In particular,
		\begin{equation}\label{eq:trident:lambda}
			T^{2g+1}(\lambda_1)=\lambda_{2g+2}=t_{c_1}t_{c_2}^gt_{c_3}(\lambda_1).
		\end{equation}
		As for the circles $\mu_k$, if $H$ is a half twist about $c_2$ such that $H^2=t_{c_2}$, then $H^{-1}T(\mu_k)=\mu_{k+1}$, for $k=1,2,\ldots,2g$, and $T(\mu_{2g+1})=\mu_1$. Therefore,
		\begin{equation}\label{eq:trident:mu}\begin{aligned}
				&T(H^{-1}T)^{2g}(\mu_1)=\mu_1,\\
				&T^{2g+1}(\mu_1)=H^{2g}(\mu_1)=t_{c_2}^g(\mu_1)=t_{c_1}t_{c_2}^gt_{c_3}(\mu_1).
			\end{aligned}\end{equation}
			Equations \eqref{eq:trident:a}, \eqref{eq:trident:lambda} and \eqref{eq:trident:mu} imply that $T^{2g+1}$ and $t_{c_1}t_{c_2}^gt_{c_3}$ agree on all circles and arcs in 
			\[\left\{a_2,a_3,\ldots,a_{2g+1},\lambda_1,\mu_1\right\},\]
			and this set satisfies the assumptions of the Alexander Method (Proposition 2.8 of \cite{MargaliFarb}). Hence, $T^{2g+1}=t_{c_1}t_{c_2}^gt_{c_3}$.
		\end{proof}
		\begin{uw}
		As B\l a\.zej Szepietowski pointed out to us, Proposition \ref{tw:trident} can be also deduced from Proposition 2.12 of \cite{ParLab}.
		\end{uw}
		\begin{wn}\label{wn:trident}
			Let $c_1,c_2,c_3,a_0,a_1,\ldots,a_{2g+1}$ be circles in an oriented surface $S_{g,3}$ of genus  $g\geq 1$ with three boundary components as in Figure \ref{fig_trident}. If we glue a M\"obius strip along $c_2$, obtaining a nonorientable surface $N_{2g+1,2}$ of genus $2g+1$ with two boundary components, then
			\[(t_{a_0}t_{a_1}t_{a_2}\cdots t_{a_{2g+1}})^{2g+1}=t_{c_1}t_{c_3}.\]
		\end{wn}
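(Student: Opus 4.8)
The plan is to transport the trident relation of Proposition~\ref{tw:trident} along the homomorphism induced by the inclusion of $S_{g,3}$ into the surface obtained by gluing a M\"obius strip along $c_2$, and to exploit the fact that this inclusion kills $t_{c_2}$.

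First I would identify the glued-up surface. Attaching a M\"obius strip $M$ along the boundary circle $c_2$ of $S_{g,3}$ leaves the Euler characteristic unchanged (since $\chi(M)=0$), removes one boundary component, and produces a nonorientable surface; comparing $\chi(S_{g,3})=-2g-1$ and the two remaining boundary components with $\chi(N_{h,2})=-h$ forces $h=2g+1$, so the result is indeed $N:=N_{2g+1,2}=S_{g,3}\cup_{c_2}M$. A self-homeomorphism of $S_{g,3}$ (fixing $\partial S_{g,3}$ pointwise) extends by the identity over $M$, giving a homomorphism $\iota_{*}\colon\mathcal{M}(S_{g,3})\to\mathcal{M}(N)$. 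Each of the circles $a_0,\dots,a_{2g+1},c_1,c_3$ lies in the interior of $S_{g,3}\subseteq N$; fixing on a regular (annular) neighbourhood of each of them the local orientation inherited from $S_{g,3}$, the map $\iota_{*}$ sends the Dehn twist about that circle in $\mathcal{M}(S_{g,3})$ to the Dehn twist about the same circle, and in the same direction, in $\mathcal{M}(N)$. Similarly $\iota_{*}(t_{c_2})$ is the twist about $c_2$ regarded as a circle of $N$.

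The only point that needs a genuine argument is that $\iota_{*}(t_{c_2})=1$, i.e.\ that a Dehn twist about a two-sided circle bounding a M\"obius strip is trivial. Such a twist can be taken to be supported in a collar of $\partial M$ inside $M$, so it lies in the image of $\mathcal{M}(N_{1,1})\to\mathcal{M}(N)$, and $\mathcal{M}(N_{1,1})$ is trivial: a homeomorphism of the M\"obius strip fixing the boundary pointwise induces the identity on $\pi_1\cong\zz$ (the boundary represents the square of the core generator, so the induced automorphism of $\zz$ must fix a generator), and on a compact surface a homeomorphism homotopic to the identity rel boundary is isotopic to the identity rel boundary. Granting this, I would apply $\iota_{*}$ to the identity $(t_{a_0}t_{a_1}\cdots t_{a_{2g+1}})^{2g+1}=t_{c_1}t_{c_2}^{g}t_{c_3}$ of Proposition~\ref{tw:trident}; since $\iota_{*}(t_{c_2})=1$ the middle factor disappears and one obtains $(t_{a_0}t_{a_1}\cdots t_{a_{2g+1}})^{2g+1}=t_{c_1}t_{c_3}$ in $\mathcal{M}(N_{2g+1,2})$, which is the assertion. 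Thus the corollary is immediate from Proposition~\ref{tw:trident} once the (standard) triviality of $t_{c_2}$ is in hand, and verifying that triviality — together with the routine bookkeeping that $\iota_{*}$ respects the relevant twists and their directions — is the only step requiring care.
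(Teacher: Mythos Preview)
Your proof is correct and follows the same approach as the paper: apply the trident relation (Proposition~\ref{tw:trident}) and use that the Dehn twist about a circle bounding a M\"obius strip is trivial. The paper simply cites Epstein's theorem for the triviality of $t_{c_2}$, whereas you spell out the surrounding details (the identification of $N_{2g+1,2}$, the inclusion-induced homomorphism, and a sketch of why $\mathcal{M}(N_{1,1})=1$), but the substance is identical.
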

		\begin{proof}
		 Recall (Theorem 3.4 of \cite{Epstein}) that the Dehn twist about a circle that bounds a  M\"obius strip is trivial.
		\end{proof}
		\begin{wn}\label{wn:trid:2}
		Let $a_i,b_i,c_1,c_2$, $i=0,1,\ldots,2g+1$, be circles in an oriented surface $S_{2g+1,2}$ of genus $2g+1\geq 3$ with two boundary components as in Figure \ref{fig_wn_tr2}. 
				\begin{figure}[h]
			\begin{center}
				\includegraphics[width=1\textwidth]{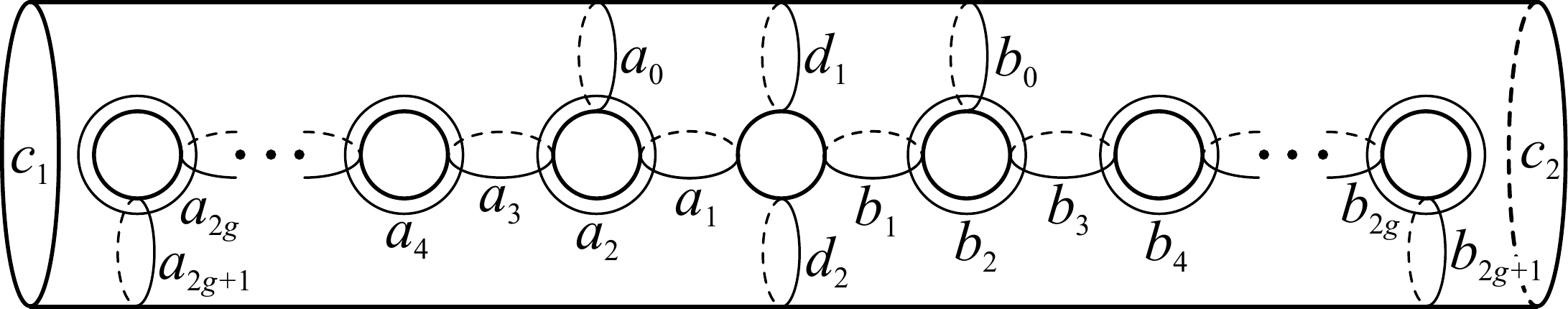}
				\caption{Circles $a_i,b_i,c_1,c_2,d_1,d_2$ -- Corollary \ref{wn:trid:2}.}\label{fig_wn_tr2} 
			\end{center}
		\end{figure}   
		Then
		\[\left(t_{a_0}t_{a_1}t_{a_2}\cdots t_{a_{2g+1}}\left(t_{b_0}t_{b_1}t_{b_2}\cdots t_{b_{2g+1}}\right)^{-1}\right)^{2g+1}=t_{c_1}t_{c_2}^{-1}.\]
		\end{wn}
		\begin{proof}
		 By Proposition \ref{tw:trident}, 
		 \[\begin{aligned}
		    &(t_{a_0}t_{a_1}t_{a_2}\cdots t_{a_{2g+1}})^{2g+1}=t_{c_1}t_{d_1}^gt_{d_2}\\
		    &(t_{b_0}t_{b_1}t_{b_2}\cdots t_{b_{2g+1}})^{2g+1}=t_{c_2}t_{d_1}^gt_{d_2},
		   \end{aligned}\]
		   where $d_1$ and $d_2$ are circles depicted in Figure \ref{fig_wn_tr2}. In order to finish the proof, observe that $t_{a_0}t_{a_1}\cdots t_{a_{2g+1}}$ and $t_{b_0}t_{b_1}\cdots t_{b_{2g+1}}$ have disjoint supports, hence they commute.
		\end{proof}
		\begin{wn}\label{wn:trid:3}
		 Let $a_i,b_i,c_1,c_2$, $i=0,1,\ldots,2g+1$, be circles in an oriented surface $S_{2g,2}$ of genus $2g\geq 2$ with two boundary components as in Figure \ref{fig_wn_tr3}. 
				\begin{figure}[h]
			\begin{center}
				\includegraphics[width=1\textwidth]{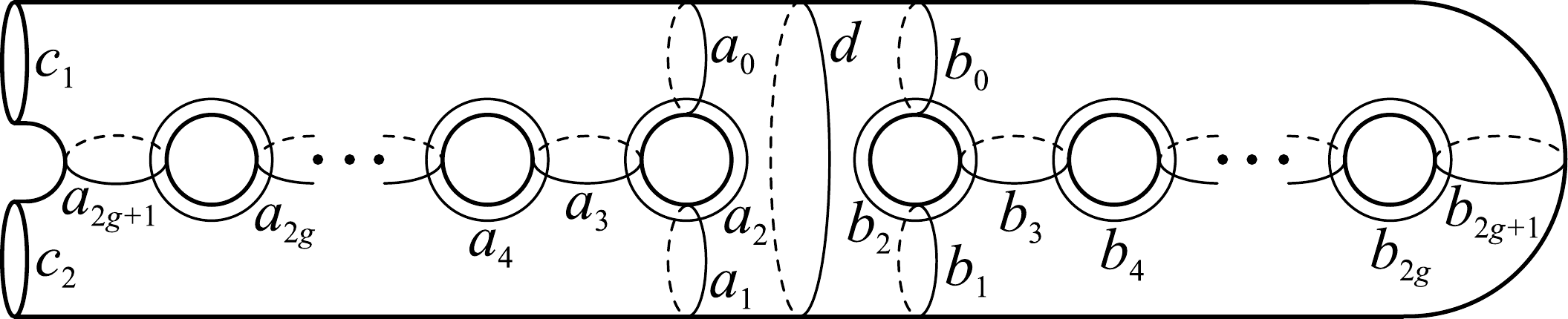}
				\caption{Circles $a_i,b_i,c_1,c_2,d$ -- Corollary \ref{wn:trid:3}.}\label{fig_wn_tr3} 
			\end{center}
		\end{figure}   
		Then
		\[\left(t_{a_0}t_{a_1}t_{a_2}\cdots t_{a_{2g+1}}\left(t_{b_0}t_{b_1}t_{b_2}\cdots t_{b_{2g+1}}\right)^{-1}\right)^{2g+1}=t_{c_1}t_{c_2}.\]
		\end{wn}
		\begin{proof}
		 By Proposition \ref{tw:trident}, 
		 \[\begin{aligned}
		    &(t_{a_0}t_{a_1}t_{a_2}\cdots t_{a_{2g+1}})^{2g+1}=t_{c_1}t_{d}^gt_{c_2}\\
		    &(t_{b_0}t_{b_1}t_{b_2}\cdots t_{b_{2g+1}})^{2g+1}=t_{d}^g,
		   \end{aligned}\]
		   where $d$ is the circle depicted in Figure \ref{fig_wn_tr3}. In order to finish the proof, observe that $t_{a_0}t_{a_1}\cdots t_{a_{2g+1}}$ and $t_{b_0}t_{b_1}\cdots t_{b_{2g+1}}$ have disjoint supports, hence they commute.
		\end{proof}
 				\FloatBarrier
\section{Data sets of roots of Dehn twists}\label{sec:data}
Following Section 2 of \cite{McCullough2010} we define a \emph{data set} of degree $n$ as a tuple
\[(n,g_0,(a,b);(c_1,n_1),\ldots,(c_m,n_m)),\]
where $n,g_0,n_i$ are non-negative integers, $a$, $b$ are residue classes modulo $n$, and each $c_i$ is a residue class modulo $n_i$. Moreover, we assume that:
\begin{itemize}
 \item[(D1)] $n>1$ is odd, each $n_i>1$, and each $n_i$ divides $n$,
 \item[(D2)] $\gcd(a,n)=\gcd(b,n)=1$ and each $\gcd(c_i,n_i)=1$.
\end{itemize}
If a data set satisfies additionally the following condition 
\begin{itemize}
 \item[(D3A)] $g_0\geq 1$ and $b\pm a=ab\mod n$,
\end{itemize}
we say that this data set is of \emph{type \TypeA}. If a data set satisfies (D1), (D2) and 
\begin{itemize}
 \item[(D3B)] $g_0\geq 0$ and $b-a=ab\mod n$,
 \item[(D4B)] $a+b+\sum_{i=1}^m \frac{n}{n_i}c_i=0\mod n$,
\end{itemize}
then we say that this data set is of \emph{type \TypeB}.

We define the \emph{genus} of a data set by the formula:
\[g=\begin{cases}
g_0n+\sum\limits_{i=1}^m\frac{n}{n_i}(n_i-1)&\text{if a data set is of type \TypeA,}\\
2g_0n+\sum\limits_{i=1}^m\frac{n}{n_i}(n_i-1)&\text{if a data set is of type \TypeB.}
    \end{cases}\]
Two data sets of type \TypeB\ are \emph{equivalent} if they differ by an involution 
\[(a,b,c_1,c_2,\ldots,c_m)\mapsto (-b,-a,-c_1,-c_2,\ldots,-c_m)\]
or reordering the pairs $(c_1,n_1),\ldots,(c_m,n_m)$. Two data sets of type \TypeA\ are \emph{equivalent} if they differ by a finite number of the following changes:
\begin{enumerate}
 \item interchanging $a$ and $b$ (note that this is possible only if both data sets satisfy $b+a=ab\mod n$);
 \item changing $(a,b)$ into $(-b,-a)$ (note that this is possible only if both data sets satisfy $b-a=ab\mod n$);
 \item changing the sign of $b$ or any of $c_1,\ldots,c_m$;
 \item reordering the pairs $(c_1,n_1),\ldots,(c_m,n_m)$.
\end{enumerate}
\begin{tw}\label{tw:orb}
Let $c$ be a nonseparating two-sided circle in a nonorientable surface $N_{g+2}$ of genus $g+2\geq 3$, and let $t_{c}$ be the Dehn twist about $c$ with respect to some fixed orientation of the regular neighbourhood of $c$.  For the given odd integer $n>1$ the conjugacy classes of roots of $t_{c}$ of degree $n$ correspond to the equivalence classes of data sets of genus $g$, degree $n$, and type
\[ \begin{cases} \text{\TypeA}& \text{if $N_g\bez c$ is nonorientable,} \\ \text{\TypeB}&\text{if $N_g\bez c$ is orientable.}\end{cases} \]
\end{tw}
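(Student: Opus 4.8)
The plan is to follow the strategy of McCullough and Rajeevsarathy \cite{McCullough2010}, adapting it to the nonorientable setting. By Proposition \ref{Prop:odd:degree} a root must have odd degree, so the restriction to odd $n$ in (D1) costs nothing. The starting point is the classical correspondence between roots of a Dehn twist and finite cyclic actions on a surface obtained by collapsing an annular neighbourhood of $c$. Concretely, suppose $f^n=t_c$ in $\mathcal{M}(N_{g+2})$. Cutting $N_{g+2}$ along $c$ produces a surface $N'$ with two boundary circles, which is $N_{g,2}$ if $N_{g+2}\bez c$ is nonorientable and $S_{g/2,2}$ if it is orientable; since $f^n=t_c$ fixes $c$, a suitable representative of $f$ permutes (or fixes) the two sides of $c$, and by collapsing each boundary circle to a puncture/marked point one obtains a homeomorphism $\bar f$ of a closed surface $\widehat N$ (nonorientable of genus $g$, or orientable of genus $g/2$) of finite order $n$, with one or two distinguished orbits of marked points. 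The two-sided-ness of $c$ and the fact that the twist is a positive power force $\bar f$ to act as a rotation of order $n$ near those points. Conversely, given such a finite-order action, one reverses the construction — remove invariant disk neighbourhoods, glue in an annulus with the appropriate fractional twist — to recover a root $f$ of $t_c$. Two roots are conjugate in $\mathcal{M}(N_{g+2})$ if and only if the corresponding actions are conjugate by a homeomorphism, i.e. topologically equivalent.

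The second step is to translate a finite cyclic action of order $n$ on $\widehat N$ (with marked orbits) into the combinatorial datum. This is the nonorientable analogue of the Nielsen/Harvey theory of cyclic branched covers: one passes to the quotient orbifold $\widehat N/\langle\bar f\rangle$, of genus $g_0$ (with an extra factor of $2$ in the orientable case because the quotient of an orientable surface by an order-$n$ action can again be taken orientable, accounting for the genus formula $2g_0n+\cdots$), recording the cone points with their orders $n_i$ and rotation numbers $c_i$, together with the two rotation numbers $a,b$ at the marked orbits coming from the two boundary circles of $N'$. Conditions (D1) and (D2) are exactly the statement that these are genuine rotation numbers of an order-$n$ action (each $n_i\mid n$, all rotation data coprime to the relevant modulus). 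The genus formula for the data set is the Riemann–Hurwitz computation for the covering $\widehat N\to\widehat N/\langle\bar f\rangle$ after filling in the two marked orbits.

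The third step, and the one that distinguishes the nonorientable problem from \cite{McCullough2010}, is the role of the two types. When $N_{g+2}\bez c$ is orientable, the cut surface is $S_{g/2,2}$ and regluing reverses the orientation across $c$; this is what forces the compatibility $b-a=ab\bmod n$ together with the closed-up condition (D4B) relating $a,b$ and the $c_i$ (the latter is a consistency relation of the form "total rotation is zero", i.e. the vanishing of a Chern/Euler-type class for the orientable quotient), and it also explains why $g_0\geq 0$ is allowed here. When $N_{g+2}\bez c$ is nonorientable the orientation-reversal is absorbed by the nonorientability, giving instead $b\pm a=ab\bmod n$ with the weaker constraint $g_0\geq 1$ (a genuinely nonorientable quotient needs at least one crosscap) and no analogue of (D4B) because $H_1$ with the relevant coefficients behaves differently. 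I would derive both relations by the standard computation of the action on $\pi_1$ (or $H_1$) near $c$: the twist contributes $+1$ to a suitable "self-linking" count, and comparing this with the local rotation data at the two marked points yields $b\mp a\equiv ab$ depending on whether the two sides are swapped, with the sign and the extra constraint dictated by orientability of the complement.

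Finally, I would check that conjugacy of roots matches the equivalence of data sets: reordering the $(c_i,n_i)$ and changing signs of $b$ and the $c_i$ correspond to relabelling cone points and to the choice of generator of the cyclic group (replacing $\bar f$ by $\bar f^k$); interchanging $a$ and $b$, or sending $(a,b)\mapsto(-b,-a)$, corresponds to swapping the two boundary components of $N'$ and to orientation-reversing self-homeomorphisms — and these moves are available precisely under the stated arithmetic side-conditions, which is why the two relations $b+a=ab$ and $b-a=ab$ appear as gates on moves (1) and (2) in the type-\TypeA\ equivalence. \textbf{The main obstacle} I anticipate is the bookkeeping in step three: correctly pinning down which of $b-a\equiv ab$ or $b+a\equiv ab$ (and whether (D4B) is present) goes with which type, i.e. carefully tracking how the gluing homeomorphism across $c$ interacts with the local rotation numbers and with (non)orientability of $N_{g+2}\bez c$; getting the signs and the existence/absence of the extra linear constraint right is the delicate point, and is exactly where the nonorientable case genuinely departs from \cite{McCullough2010}.
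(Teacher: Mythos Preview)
Your overall architecture matches the paper's: cut along $c$, cap off to a closed surface $F$, invoke Nielsen realization to get a genuine order-$n$ homeomorphism $t$, pass to the orbifold quotient $F/\langle t\rangle$, read off rotation data at cone points, and use Riemann--Hurwitz for the genus formula. But several of your step-three attributions are off and would steer the details wrong.

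First, the root $h$ \emph{never} swaps the two sides of $c$: this is exactly what Proposition~\ref{Prop:odd:degree} (odd degree) buys. So both distinguished points $P,Q$ of $F$ are genuine fixed points of $t$, not a length-two orbit. The sign ambiguity in (D3A) has nothing to do with side-swapping; it comes from the nonorientability of the quotient orbifold. One chooses generators $\alpha,\beta$ of $\pi_1^{orb}$ (small loops around the images of $P,Q$); in a nonorientable base there is no canonical way to compare the local orientation at $Q$ carried by the presentation with the one inherited from the annular neighbourhood of $c$, and the two possible comparisons give $k-l\equiv 1$ or $k+l\equiv 1\pmod n$, hence $b-a\equiv ab$ or $b+a\equiv ab$. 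In the orientable case (type~B) the global orientation of $F$ removes this ambiguity and only $b-a\equiv ab$ survives.

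Second, the absence of a (D4)-type condition in case~A is not a vague $H_1$ phenomenon. The long relation in $\pi_1^{orb}$ of a nonorientable base reads $\alpha\beta\gamma_1\cdots\gamma_m\,\mu_1^2\cdots\mu_{g_0}^2=1$; since the $\rho(\mu_j)$ are unconstrained elements of $C_n$ and squaring is a bijection on a group of odd order, this relation simply \emph{determines} $\rho(\mu_{g_0})$ rather than constraining $a,b,c_i$. (This is also why $g_0\geq 1$ is needed in type~A.) In type~B the long relation has $\prod[a_j,b_j]$ instead, which dies in $C_n$, and (D4B) is what remains.

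Third, your account of the equivalence moves needs correction: sign changes on individual $c_i$ (or on $b$) in type~A do \emph{not} come from replacing $\bar f$ by a power --- that would rescale all rotation numbers by the same unit. They come from sliding the corresponding cone point around a one-sided loop $\mu_j$ in the nonorientable quotient, which reverses the local orientation at that single cone point. The moves $(a,b)\mapsto(b,a)$ and $(a,b)\mapsto(-b,-a)$ arise from conjugating by an $f\in\mathcal{M}(N_{g+2})$ that interchanges the two sides of $c$; whether this produces $(b,a)$ or $(-b,-a)$ depends on which sign holds in (D3A), which is why those moves are gated by the two versions of the congruence.
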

\begin{proof}
 The general line of the proof is exactly the same as the proof of Theorem 2.1 in \cite{McCullough2010}, but there are some essential differences, hence we sketch the argument indicating the changes forced by the nonorientability of $N_{g+2}$.
 
 We first construct a data set for a given root $h$ of degree $n$ of $t_{c}$. As was observed by Margalit and Schleimer \cite{MargSchleim}, if $h^n=t_c$, then $h$ must preserve the isotopy class of $c$. Moreover, $h$ must preserve the chosen orientation of the regular neighbourhood of $c$ and, by Proposition \ref{Prop:odd:degree}, $h$ cannot interchange the sides of $c$.

 As in \cite{McCullough2010}, we cut $N_{g+2}$ along $c$ and fill the obtained two boundary components with disks. As a result, we obtain a closed surface $F$ which is either a nonorientable surface of genus $g$ or an orientable surface of genus $\frac{g}{2}$. We will refer to the first situation as \emph{case \TypeA} and to the second one as \emph{case \TypeB}. 
 
 Up to isotopy we can assume that $h(c)=c$, hence $h$ induces an element $t\in \mathcal{M}(F)$. Moreover, since $h^n=t_c$, $t$ has order $n$ in $\mathcal{M}(F)$. By Kerckhoff solution to the Nielsen realization problem \cite{Kerk2}, we can assume that $t$ is in fact a homeomorphism of $F$ of order $n$. By construction, $t$ has at least two fixed points $P$ and $Q$ which correspond to the center points of the two disks which we glued to $N_{g+2}\bez c$. Therefore, the action of the cyclic group $\gen{t\st t^n}\cong C_n$ on $F$ leads to the orbifold $\mathcal{O}=F/\gen{t}$ of genus $g_0$ with two cone points of order $n$ and possibly other cone points $x_i$, $i = 1, \ldots, m$, of orders $n_i$, where $n_i$ is a divisor of $n$. Moreover, the orbifold fundamental group $\pi^{orb}_1(\mathcal{O})$ of $\mathcal{O}$ fits into the following short exact sequence:
  \[\xymatrix@C=1.5pc{1\ar[r]&\pi_1(F)\ar[r]&\pi^{orb}_1(\mathcal{O})\ar[r]^-{\ro}&C_n\ar[r]&1}.\]
 
 Suppose first that we are in case \TypeB, that is $F$ is orientable. We choose the orientation of $F$ so that it agrees in a neighbourhood of $P$ with the orientation induced by the orientation in the regular neighbourhood of $c$ in $N_{g+2}$. Thus, the orientation of $F$ around $Q$ is opposite to the orientation induced by the orientation in the regular neighbourhood of $c$. Furthermore, $t$ is an orientation preserving homeomorphism of $F$. In particular, if $\frac{2\pi k}{n}$ and $\frac{2\pi l}{n}$ are the rotation angles of $t$ around $P$ and $Q$ respectively (with respect to the chosen orientation of $F$), then the condition $h^n=t_c$ implies that
 \begin{equation}k-l=1\mod n.\label{eq:kl:b}\end{equation}
 
 We now choose generators for the orbifold fundamental group of $\mathcal{O}$ so that 
\[\begin{aligned}
  \pi^{orb}_1(\mathcal{O})=\biggen{&\alpha,\beta,\gamma_1,\ldots,\gamma_m,a_1,b_1,\ldots,a_{g_0},b_{g_0}\bigst \\
 &\alpha^n,\beta^n,\gamma_1^{n_1},\ldots,\gamma_m^{n_m},\alpha\beta\gamma_1\cdots\gamma_m\prod_{j=1}^{g_0}[a_j,b_j]}.
 \end{aligned}\]
 We can assume that lifts of $\alpha^n$ and $\beta^n$ to $\pi_1(F)$ are loops around $P$ and $Q$ respectively, and the lift of $\alpha^n$ is positively oriented. 
 
 We now consider the images of generators of $\pi^{orb}_1(\mathcal{O})$ under the projection $\map{\ro}{\pi^{orb}_1(\mathcal{O})}\gen{t\st t^n}$. The lift of $\alpha$ is a $\frac{1}{n}$-th of a full loop encircling $P$, so $\ro(\alpha)=t^a$, where $ak=1\mod n$ (recall that we assumed that the rotation angle of $t$ at $P$ is $\frac{2k\pi}{n}$) and, similarly, $\ro(\beta)=t^b$ where $bl=1\mod n$. Analogously, if we presume that the rotation angle at each point in the preimage of $x_i$ equals $\frac{2\pi y_i}{n_i}$ with $\gcd(y_i, n_i) = 1$, then $\ro(\gamma_i)=\left(t^{n/n_i}\right)^{c_i}$ where~$c_i$ is a residue class mod $n_i$ satisfying $c_iy_i = 1 \mod n_i$. In particular, we get condition (D2) of the data set and \eqref{eq:kl:b} gives condition \CondB. The long relation in the presentation of $\pi^{orb}_1(\mathcal{O})$ yields condition (D4B) of the data set.
  
 Observe also that $F$, as an orientable surface of genus $\frac{g}{2}$, has Euler characteristic equal to $2-g$. Therefore, by the multiplicativity of the orbifold Euler characteristic, we have
 \[\frac{2-g}{n}=2-2g_0+2\left(\frac{1}{n}-1\right)+\sum_{i=1}^m\left(\frac{1}{n_i}-1\right).\]
 This implies that obtained data set has in fact genus $g$.
 
 Finally, assume that $f\in\mathcal{M}(N_{g+2})$ is such that $h'=fhf^{-1}$ is also a root of $t_c$. If $f$ does not interchange the sides of $c$, then the data sets for $h$ and $h'$ can only differ by a reordering of the pairs $(c_1,n_1),\ldots,(c_m,n_m)$. And if $f$ does interchange the sides of $c$, then $f$ induces an orientation reversing map of $F$ which interchanges $P$ and $Q$. As a result,  homeomorphism $\map{t'}{F}{F}$ induced by $h' = fhf^{-1}$ yields a data set with $(a,b)$ changed into $(-b,-a)$ and $c_i$ changed into $-c_i$. Hence, conjugate roots lead to equivalent data sets.
 
 Now let us turn to case \TypeA, so assume that $F$ is nonorientable. The general line of arguments is similar to that in case \TypeB, but there are some essential differences. 
 
 Observe first that Proposition \ref{Prop:odd:degree} implies that homeomorphism $\map{t}{F}{F}$ is of odd degree, hence it can have only isolated fixed points. Therefore, there are no reflections in the orbifold fundamental group of $\mathcal{O}=F/\gen{t}$. So we can choose generators for this group such that
 \[\begin{aligned}
  \pi^{orb}_1(\mathcal{O})=\biggen{&\alpha,\beta,\gamma_1,\ldots,\gamma_m,\mu_1,\mu_2,\ldots,\mu_{g_0}\bigst \\
 &\alpha^n,\beta^n,\gamma_1^{n_1},\ldots,\gamma_m^{n_m},\alpha\beta\gamma_1\cdots\gamma_m\prod_{j=1}^{g_0}\mu_j^2}.
 \end{aligned}\]
 Moreover, we can assume that lifts of $\alpha^n$ and $\beta^n$ to $\pi_1(F)$ are loops around $P$ and $Q$ respectively, and the lift of $\alpha^n$ to $\pi_1(F)$ is positively oriented with respect to the orientation inherited from the regular neighbourhood of $c$ in~$N_{g+2}$. 
 
 We now have two possibilities: either the lift of $\beta^n$ to $\pi_1(F)$ is positively oriented with respect to the orientation induced from the regular neighbourhood of $c$, or not. 
 In the first case, the rotation angles $\frac{2\pi k}{n}$ and $\frac{2\pi l}{n}$ must satisfy
 \[k+l=1\mod n\]
 and in the second case, they satisfy
 \[k-l=1\mod n.\]
 This leads to condition (D3A).
 As for condition (D2), we obtain it in exactly the same way as in case \TypeB.
 
 Since $F$ is a nonorientable surface of genus $g$, by the multiplicativity of the orbifold Euler characteristic, we get
 \[\frac{2-g}{n}=2-g_0+2\left(\frac{1}{n}-1\right)+\sum_{i=1}^m\left(\frac{1}{n_i}-1\right),\]
 which implies that the obtained data set has genus $g$.
 
 As for conjugacy classes of roots, if $f\in {\mathcal{M}(N_{g+2})}$ is such that $h'=fhf^{-1}$ is also a root of $t_c$, then the map induced by $f$ on $F$ either does not change the rotation angles around $P$ and $Q$ or it interchanges them. If $f$ does not interchange $P$ and $Q$, then it must also preserve the local orientation around $P$ and consequently $f$ maps $(a,b)$ in the first data set to  $(a,b)$ in the second data set. If $f$ interchanges $P$ with $Q$, then it sends $(a,b)$ either to $(b,a)$ (this happens when $b+a=ab\mod n$), or to $(-b,-a)$ (this happens when $b-a=ab\mod n$). 
 Hence $h$ and $h'$ have equivalent data sets.
 
 
Suppose now that we start from a data set. We construct the corresponding orbifold $\mathcal{O}$ (nonorientable in case \TypeA, and orientable in case \TypeB), and the representation
\[\map{\ro}{\pi_1^{orb}(\mathcal{O})}{\gen{t\st t^n}}.\]
In case B it turns out that we can always choose such a generating set for $\pi^{orb}_1(\mathcal{O})$ that $\ro(a_i)=\ro(b_i)=1$. The idea is to start with any generating set and then slide the cone point corresponding to $P$ around loops homologous to $a_i$ or $b_i$ -- for details see \cite{McCullough2010}.

In case A, the situation is slightly more complicated. If we slide the cone point corresponding to $Q$ along a loop homologous to $\mu_1\mu_{2}$, then we get new generators $\alpha',\beta',\gamma_1',\ldots,\gamma_m',\mu_1',\mu_2',\ldots,\mu_{g_0}'$ of $\pi^{orb}_1(\mathcal{O})$ such that $\ro(\mu_1')=\ro(\mu_1)\ro(\beta)$ and $\ro(\mu_{2}')=\ro(\mu_{2})\ro(\beta)$. The images under $\ro$ of the remaining generators do not change. Since $\ro(\beta)$ generates $\gen{t\st t^n}$, we can repeat such slides until for the new $\mu_1$ we obtain $\ro(\mu_1)=1$. By an obvious induction, we can assume that 
\[\ro(\mu_1)=\ro(\mu_2)=\cdots=\ro(\mu_{g_0-1})=1.\]
Note also, that the long relation in $\pi^{orb}_1(\mathcal{O})$ gives
\begin{equation}\ro(\mu_{g_0})^{-2}=\ro(\alpha\beta\gamma_1\cdots\gamma_m).\label{eq:LongRelAro}\end{equation}
Therefore $\ro(\mu_{g_0})$ is completely determined by the values of $\ro$ on the remaining generators of $\pi^{orb}_1(\mathcal{O})$.

As is explained in \cite{McCullough2010}, the kernel of the homomorphism $\ro$ is torsion-free, hence we obtain a covering $F\to \mathcal{O}$, where $F$ is a nonorientable surface of genus $g$ in case \TypeA\ (here we use the fact that $n$ is odd), and $F$ is an orientable surface of genus $\frac{g}{2}$ in case \TypeB. Then we remove disks around the fixed points $P$ and $Q$ corresponding to $\alpha$ and $\beta$, and identify the resulting boundary components with appropriate choice of orientations (depending on the type of the data set). This completes the construction of a root of $t_c$ from a given data set.

We now claim that if we start from two roots of $t_c$ with equivalent data sets 
\[\begin{aligned}
 &(n,g_0,(a,b);(c_1,n_1),\ldots,(c_m,n_m))  \\
 &(n,g_0,(a',b');(c_1',n_1),\ldots,(c_m',n_m))
  \end{aligned}
\]
of the same type, then after an appropriate choice of generators for the respective orbifold fundamental groups and using the natural isomorphism $\gen{t \st t^n} \cong \gen{t' \st (t')^n}$, we can assume that the corresponding representations $\map{\ro}{\pi^{orb}_1(\mathcal{O})}{C_n}$ and
$\map{\ro'}{\pi^{orb}_1(\mathcal{O}')}{C_n}$ are exactly the same, that is $\ro(\alpha)=\ro'(\alpha')$, $\ro(\beta)=\ro'(\beta')$, $\ro(\gamma_i)=\ro'(\gamma_i')$ and additionally  
\[\begin{cases}
   \ro(\mu_i)=\ro'(\mu_i')&\text{ in case A}\\
   \ro(a_i)=\ro'(a_i'),\ \ro(b_i)=\ro'(b_i') &\text{ in case B.}
  \end{cases}
\]
Suppose first that we are in case B. As we observed before, we can assume that both $\ro$ and $\ro'$ map the hyperbolic generators to 1. Now the claim is obvious if $\ro(\alpha)=t^a=t^{a'}=\ro'(\alpha')$ (because then automatically $b=b'$ and $\{c_1,\ldots,c_m\}=\{c_1',\ldots,c_m'\}$), and if $\ro(\alpha)=t^a=t^{-b}=\ro'(\beta')^{-1}$, then we can interchange the roles of $\alpha'$ and $\beta'$  and additionally change the orientation of the orbifold $\mathcal{O}'$. 

In case A, as we argued before, we can assume that $\ro(\mu_i)=\ro'(\mu_i')=1$ for $i= 1,2, \ldots, g_0-1$. If $(a,b)$ and $(a',b')$ satisfy condition (D3A) with a different sign, then we homeomorphically modify the generators of $\pi^{orb}_1(\mathcal{O}')$ by sliding the cone point corresponding to $Q$ around a loop homologous to $\mu_{g_0}$. As a result of such a conversion the new generator $\beta''$ satisfies $\ro'(\beta'')=\ro'(\beta')^{-1}$, the value of $\ro'(\mu_{g_0}')$ changes to $\ro'(\mu_{g_0}'')=\ro'(\mu_{g_0}')\ro'(\beta')$, and the values of $\ro'$ on all other generators of $\pi^{orb}_1(\mathcal{O}')$ remain unchanged. Thus, we can assume that both data sets satisfy condition (D3A) with the same sign. Then, similarly as in case B, we can assume that $\ro(\alpha)=\ro'(\alpha')$ and $\ro(\beta)=\ro'(\beta')$ (we possibly interchange $\alpha'$ with $\beta'$). Moreover, by sliding cone points $x_i$ around a loop homologous to $\mu_{g_0}$, we can assume that $\ro(\gamma_i)=\ro'(\gamma_i')$. Finally, by \eqref{eq:LongRelAro}, $\ro(\mu_{g_0})=\ro'(\mu_{g_0}')$. 

Hence, we can assume that the equivalent data sets correspond to identical representations $\ro$ and $\ro'$ and, as is explained in details in \cite{McCullough2010}, such a situation leads to conjugate roots of $t_c$. 
\end{proof}
As an immediate consequence of the above theorem, we obtain some restrictions on the existence of roots of Dehn twists about nonseparating circles.
\begin{tw} \label{tw:root:existence}
Let $c$ be a nonseparating two-sided circle in a nonorientable surface $N_{g+2}$ of genus $g+2$.
\begin{enumerate}
           \item If $N_{g+2}\bez c$ is nonorientable, then the Dehn twist $t_c$ has a nontrivial root in $\mathcal{M}(N_{g+2})$ if and only if $g=3$ or $g\geq 5$.
           \item If $N_{g+2}\bez c$ is orientable, then $g=2g'$ for some positive integer $g'$ and $t_c$ has a nontrivial root in $\mathcal{M}(N_{g+2})$ if and only if $g'\geq 2$.           
          \end{enumerate}
\end{tw}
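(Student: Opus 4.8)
The plan is to read both statements off Theorem \ref{tw:orb}. Combined with Proposition \ref{Prop:odd:degree}, that theorem says $t_c$ has a nontrivial root precisely when there is a data set of genus $g$ (of any odd degree $n>1$) of the type dictated by $N_{g+2}\bez c$ — type \TypeA\ when the complement is nonorientable, type \TypeB\ when it is orientable. So the whole problem becomes the purely arithmetic question of deciding, for each $g$, whether such a data set exists, and I would organize everything around the genus formulas
\[g=g_0 n+\sum_{i=1}^m \tfrac{n}{n_i}(n_i-1)\quad(\text{type }\TypeA),\qquad g=2g_0 n+\sum_{i=1}^m \tfrac{n}{n_i}(n_i-1)\quad(\text{type }\TypeB),\]
together with the observation that, since $n$ is odd and each $n_i$ is an odd divisor of $n$, every summand $\tfrac{n}{n_i}(n_i-1)$ is even.

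For part (1) I would first prove the "only if" direction by a parity-and-size count: from (D3A) we have $g_0\geq 1$, so in type \TypeA\ the genus satisfies $g\equiv g_0\pmod 2$ and $g\geq g_0 n\geq 3$; hence $g\in\{1,2\}$ is impossible outright, and $g=4$ would force $g_0$ even, so $g_0\geq 2$ and $g\geq 6$, again impossible. For the "if" direction I would exhibit explicit data sets: take $n=3$ and $(a,b)=(2,1)$, which satisfies (D3A) with the minus sign since $b-a\equiv -1\equiv 2\equiv ab\pmod 3$; then $(3,1,(2,1);(1,3),\ldots,(1,3))$ with $\tfrac{g-3}{2}$ cone points of order $3$ has genus $g$ for every odd $g\geq 3$, and $(3,2,(2,1);(1,3),\ldots,(1,3))$ with $\tfrac{g-6}{2}$ cone points of order $3$ has genus $g$ for every even $g\geq 6$ (type \TypeA\ imposes no condition on the $c_i$ beyond $\gcd(c_i,3)=1$, so these are legitimate). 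This shows the realizable genera are exactly the odd $g\geq 3$ and the even $g\geq 6$, i.e. $g=3$ or $g\geq 5$.

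For part (2), a nonseparating $c$ with orientable complement cuts $N_{g+2}$ into $S_{g',2}$ with $g=2g'$, so that part is immediate. For the "if" direction ($g'\geq 2$) I would use the single family $(3,0,(2,1);(c_1,3),\ldots,(c_{g'},3))$: it satisfies (D3B) since $b-a\equiv ab\pmod 3$, has genus $\sum_{i=1}^{g'}2=2g'$, and (D4B) reduces to $\sum_i c_i\equiv 0\pmod 3$, which can always be arranged by choosing each $c_i\in\{1,2\}$ once $g'\geq 2$ (take all $c_i=1$ if $3\mid g'$, and flip one or two of them otherwise). For the "only if" direction I have to rule out $g'=1$, i.e. genus $2$: if $g_0\geq 1$ then $g\geq 2n\geq 6$; if $g_0=0$ then $\sum_{i=1}^m\tfrac{n}{n_i}(n_i-1)=2$, and since each summand is $\geq 2$ with equality only for $n=n_i=3$, this forces $m=1$ and $n=n_1=3$, whence (D3B) forces $(a,b)=(2,1)$ and then (D4B) demands $c_1\equiv 0\pmod 3$, contradicting $\gcd(c_1,3)=1$.

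The routine part is the construction of the explicit data sets; the only place needing real care is the "only if" direction, and within it the exceptional small genera — genus $4$ in part (1) and genus $2$ in part (2). The main obstacle I anticipate is making the genus-$2$ exclusion in part (2) genuinely exhaustive: one must verify that no choice of odd $n>1$, cone-point orders $n_i\mid n$, and base genus $g_0$ other than the listed one can realize genus $2$, which rests on the elementary but essential lower bound $\tfrac{n}{n_i}(n_i-1)\geq 2$ with equality exactly when $n=n_i=3$.
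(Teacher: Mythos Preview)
Your proposal is correct and follows the same overall strategy as the paper: reduce existence of a nontrivial root to existence of a data set of the appropriate type via Theorem~\ref{tw:orb}, then do the arithmetic. The ``only if'' arguments are essentially identical to the paper's---your parity trick for ruling out $g=4$ (forcing $g_0$ even, hence $g_0\geq 2$ and $g\geq 2n\geq 6$) is a slight variant of the paper's direct observation that $\sum_i(1-1/n_i)=1/3$ is impossible, and your exclusion of $g'=1$ matches the paper's exactly.

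The one substantive difference is in the ``if'' direction: the paper does not prove existence here at all but forward-references Theorems~\ref{tw:allrots:a} and~\ref{tw:allrots:b} and the explicit constructions of Section~\ref{sec:geom:express}. You instead give self-contained degree-$3$ data sets---$(3,g_0,(2,1);(1,3),\ldots,(1,3))$ in type~\TypeA\ and $(3,0,(2,1);(c_1,3),\ldots,(c_{g'},3))$ in type~\TypeB\ with the $c_i$ chosen to satisfy (D4B). This is cleaner for the purpose of this theorem, and in fact your constructions are essentially what the paper later uses to prove Corollary~\ref{wn:deg3}; the paper's forward references buy more (roots of many degrees, geometric realizations) but are overkill for the bare existence statement.
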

\begin{proof} The existence part of the theorem will be proved algebraically in Section \ref{sec:degrees} (Theorems \ref{tw:allrots:a} and \ref{tw:allrots:b})  and geometrically in Section \ref{sec:geom:express}, hence we now concentrate on the 'only if' part. 
 \begin{enumerate}
  \item By conditions (D1) and (D3A), the genus of a data set in case~\TypeA\ satisfies
  \[g=n\left(g_0+\sum_{i=1}^m\left(1-\frac{1}{n_i}\right)\right)\geq 3.\]
  Moreover, if $g=4$, then $g_0=1,n=3$ and 
  $\sum\limits_{i=1}^m\left(1-\frac{1}{n_i}\right)=\frac{1}{3}$,
  but this is not possible.
  \item The statement is trivial for $g=0$, so it is enough to prove that there is no data set of type \TypeB\ if $g=2$.
  By condition (D1), the genus of a data set in case \TypeB\ satisfies
  \[g=n\left(2g_0+\sum_{i=1}^m\left(1-\frac{1}{n_i}\right)\right).\]
  Hence if $g=2$, then $n=3, g_0=0, m=1$ and $n_1=3$. But then condition \CondB\ implies that $a=2$ and $b=1$, which contradicts (D2) and (D4B).
 \end{enumerate}
\end{proof}
%
%
\section{Degrees of roots of Dehn twists.}\label{sec:degrees}
Margalit and Schleimer \cite{MargSchleim} constructed a root of degree $2g+1$ of a Dehn twist $t_{c}$ about a nonseparating circle $c$ in an orientable surface $S_{g+1}$, where $g \geq 1$. Later McCullough, Rajeevsarathy and Monden proved that this is the maximal possible degree of a root of $t_{c}$ (Corollary 2.2 of \cite{McCullough2010} and Corollary~C of \cite{Monden-roots}).

In the case of nonorientable surfaces the problem of determining the maximal degree of a root of a Dehn twist about a nonseparating circle is much more complicated and we give only some partial results.
\begin{tw} \label{tw:all:degs:a}
Let $c$ be a nonseparating two-sided circle in a nonorientable surface $N_{g+2}$ of genus $g+2$, where $g=3$ or $g\geq 5$, such that $N_{g+2}\bez c$ is nonorientable. Let $N$ be the maximal possible degree of a nontrivial root of the Dehn twist $t_c$.
\begin{enumerate}
           \item If $g$ is odd, then $N=g$.
           \item If $g=2^l k$ for some odd integers $l\geq 1$ and $k$, then $N=\frac{2^l+1}{3}\cdot k$. In particular, $\frac{g}{2}\geq N>\frac{g}{3}$           
            \item Let $g=2^lk_1k_2$ for even integer $l\geq 2$ and some positive odd integers $k_1,k_2$ such that $2^lk_1+1=0\mod 3$, and $k_1$ is as small as possible (for a fixed $g$). Then $N=\frac{2^lk_1+1}{3}\cdot k_2$. In particular, $\frac{g}{2}>N>\frac{g}{3}$.
            \item Suppose that $g$ does not satisfy (1) -- (3) above and assume that there exist odd integers $1\leq k\leq\frac{g}{3}$ and $k_1,k_2\geq 1$ such that $g+k=k_1k_2$, where $k_1+1=0\mod 4$ and $k|\frac{k_1+1}{4}\cdot k_2$. Then $\frac{g}{3}>N\geq \frac{k_1+1}{4}\cdot k_2$. In particular, $\frac{g}{3}>N>\frac{g}{4}$.
            \item Suppose that $g$ does not satisfy (1) -- (4) above and assume that $g=2^2k$ for some odd integer $k$. Then $N=k=\frac{g}{4}$.
            \item If $g$ does not satisfy (1) -- (5) above, then $\frac{g}{4}>N>\frac{g}{6}$.
          \end{enumerate}
\end{tw}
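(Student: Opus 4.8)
The plan is to push the statement through Theorem \ref{tw:orb}, reducing it to an arithmetic optimization over data sets, and then to carry out a genus-by-genus analysis organised by the $2$-adic valuation of $g$. The first point is that the conditions (D2) and (D3A) on the residue classes are never an obstruction when $n$ is odd: given a \emph{numerical skeleton}, that is $g_0\geq 1$, $m\geq 0$, and divisors $n_1,\dots,n_m$ of $n$ with each $n_i>1$ and $g=n\bigl(g_0+\sum_{i=1}^m(1-\tfrac1{n_i})\bigr)$, one completes it to a type $\TypeA$ data set by taking $c_i=1$, choosing $a$ with $\gcd(a,n)=\gcd(1-a,n)=1$ (possible because every prime divisor of the odd number $n$ is $\geq 3$), and setting $b\equiv a(1-a)^{-1}\pmod n$, so that $b-a\equiv ab\pmod n$ and $\gcd(b,n)=1$ automatically. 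Hence, by Theorem \ref{tw:orb}, $N$ is the largest odd $n>1$ for which a numerical skeleton exists. Writing $q_i=n/n_i$, each $q_i$ is an odd divisor of $n$ with $q_i\leq n/3$, and the skeleton equation becomes $\sum_{i=1}^m q_i=n(g_0+m)-g$. Reducing mod $2$, each summand $n-q_i$ of $g-ng_0$ is even, so $g\equiv g_0\pmod 2$; thus $g_0$ is odd (hence $\geq 1$) if $g$ is odd, and even (hence $\geq 2$) if $g$ is even. Together with $g\geq ng_0$ this gives $n\leq g$ in general and $n\leq g/2$ for even $g$, and $q_i\leq n/3$ sharpens this to $n\leq 3g/(3g_0+2m)$, with equality only when $3\mid n$ and every $n_i=3$. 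Part (1) is now immediate: for odd $g$ take $g_0=1$, $m=0$.

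For even $g$ write $g=2^l k$ with $k$ odd and $l\geq 1$; the claim is that the optimum is attained with $g_0=2$ and $m$ small, and I would treat the small values of $m$ in turn. For $m=0$ the equation forces $n=g/g_0$ with $g_0$ even dividing $g$, whose largest odd value is $n=k$ (take $g_0=2^l$). For $m=1$, $g_0=2$, the equation reads $g=n\bigl(3-\tfrac1{n_1}\bigr)$, i.e.\ $n=gn_1/(3n_1-1)$; since $\gcd(3n_1-1,n_1)=1$ we need $(3n_1-1)\mid g$, and writing $3n_1-1=2^l s$ with $s$ odd forces $s\mid k$ and $3\mid 2^l s+1$, so $n_1=(2^l s+1)/3$ and $n=(2^l s+1)k/(3s)$. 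As $n$ is decreasing in $n_1$, one minimises $s$: when $l$ is odd this gives $s=1$ and $n=(2^l+1)k/3$ (part (2), realised instead via $m=0$ when $l=1$), and when $l$ is even it gives the least odd divisor $k_1\equiv 2\pmod 3$ of $k$, so $n=(2^l k_1+1)k_2/3$ with $k=k_1k_2$ (part (3)).

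Part (4) is handled with $g_0=2$, $m=2$: for $k,k_1,k_2$ as in the hypothesis and $n=\tfrac{k_1+1}{4}k_2$, the skeleton equation reduces to $q_1+q_2=k_2+k$, and the choice $q_1=k_2$, $q_2=k$ is admissible precisely because $k_2\mid n$ (automatic once $4\mid k_1+1$) and $k\mid n$ (i.e.\ $k\mid\tfrac{k_1+1}{4}k_2$), yielding a root of degree $\tfrac{k_1+1}{4}k_2$. Parts (5) and (6) are the residual regimes: (5) where $l=2$ and $k$ has neither a divisor $\equiv 2\pmod 3$ nor admissible part-(4) data, so $m=0$ wins with $N=k=g/4$; and (6) where only $m=2$ with all $n_i$ close to $3$ survives, forcing $g/6<N<g/4$. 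In every case one must additionally establish a matching \emph{upper} bound, i.e.\ that no larger odd $n$ admits a numerical skeleton.

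That upper bound is the main obstacle. The continuous estimate $n\leq 3g/(3g_0+2m)$ is usually \emph{not} attained: the extremal value (for instance $3g/8$ when $m=1$, $g_0=2$) is even, or attaining it requires every $n_i=3$ and hence $3\mid n$. One therefore has to track how $v_2(g)$, $v_3(g)$, and the set of residues mod $3$ occurring among the divisors of $k$ obstruct attainment — this is exactly what forces the six-way split — and show in each regime that the skeleton exhibited above is optimal. I expect the argument to stay manageable by keeping all of the bookkeeping in terms of the odd divisors $q_i\leq n/3$ of $n$ and the single quantity $n(g_0+m)-g$, rather than working with the $n_i$ and the real-valued defect $\sum(1-1/n_i)$.
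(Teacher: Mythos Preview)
Your approach is the same as the paper's: both reduce via Theorem~\ref{tw:orb} to maximising odd $n$ over ``numerical skeletons'', both observe that (D2)/(D3A) never obstruct (the paper simply takes $(a,b)=(2,2)$, $c_i=1$ throughout, which is your $a=2$ in the $b+a=ab$ branch), and both use the parity constraint $g\equiv g_0\pmod 2$ together with the inequality $g\geq n\bigl(g_0+\tfrac{2m}{3}\bigr)$ to organise the case analysis by small $(g_0,m)$. Your treatment of parts (1)--(4) matches the paper's essentially line for line, and your $q_i=n/n_i$ bookkeeping is a harmless reparametrisation.

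The sketch goes wrong at~(6). You write that ``only $m=2$ with all $n_i$ close to $3$ survives'', but in fact once $g$ fails (1)--(4) the paper shows that $g_0=2,\ m=2$ is \emph{impossible}: writing $g+\tfrac{n}{n_2}=\tfrac{n}{n_1}(4n_1-1)$ and setting $k=\tfrac{n}{n_2}$, $k_1=4n_1-1$, $k_2=\tfrac{n}{n_1}$ recovers exactly the hypothesis of~(4). So in case~(6) every skeleton has $g_0\geq 4$ or $g_0=2,\ m\geq 3$, and the lower bound $N>g/6$ is obtained with $g_0=4$, $m=2$ (taking $n_1=n_2=n$ when $g\equiv 4\pmod 6$, and $n_1=n_2=n/(3k)$ when $g=2^{2l}\cdot 3k$ with $l\ge 2$), not with $g_0=2$ and $n_i$ near $3$. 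Similarly in~(5) the realising skeleton is $g_0=4$, $m=0$; the point is precisely that $g_0=2$ with $m\le 2$ has been exhausted by (2)--(4), so $n\leq g/4$ follows, and $g=4k$ with $k$ odd attains it. Your last paragraph correctly anticipates that the upper bounds are the crux, but the plan you outline for (5)--(6) would not produce them; you need to push to $g_0=4$.
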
 
\begin{proof}
By conditions (D1) and (D3A), the genus of a data set in case \TypeA\ satisfies
  \begin{equation}g=n\left(g_0+\sum_{i=1}^m\left(1-\frac{1}{n_i}\right)\right)\geq ng_0\geq n,\label{eq:tw:max:a}\end{equation}
  hence $N\leq g$. 
 \begin{enumerate}
  \item If $g$ is odd, then there is a data set of type A with $n=g$, $g_0=1$, $(a,b)=(2,2)$ and $m=0$. 
  \item Suppose first that $l=1$, that is $g=2k$ and $k$ is odd. In this case there is a data set of type A with $n=k$, $g_0=2$, $(a,b)=(2,2)$ and $m=0$.
  
  
  It remains to show that this is the maximal possible degree. Observe that if $g$ is even, then $g_0$ cannot be odd, because otherwise the right hand side of 
  \[g=ng_0+\sum_{i=1}^m\left(n-\frac{n}{n_i}\right)\]
  would be odd. Hence $g_0\geq 2$ and \eqref{eq:tw:max:a} implies that $n\leq \frac{g}{g_0}\leq \frac{g}{2}$. This completes the proof for $l=1$.
  
   Note that $n=\frac{g}{2}$ implies that $m=0$ and $g=2n$ which proves that if $l>1$ or in cases (3) -- (6) we have either $g_0\geq 4$ or $g_0=2$ and $m\geq 1$. Moreover, if $g_0\geq 4$, then \eqref{eq:tw:max:a} implies that $n\leq \frac{g}{g_0}\leq \frac{g}{4}$, therefore assume that $g_0=2$.  If $m\geq 2$, then 
  \[g=2n+\sum_{i=1}^m\left(n-\frac{n}{n_i}\right)\geq 2n+2\left(n-\frac{n}{3}\right)=\frac{10}{3}n>3n,\]
  so assume that $m=1$. In this case, we have 
  \[g=2^lk=3n-\frac{n}{n_1}=\frac{n}{n_1}(3n_1-1),\]
  hence $3n_1-1=2^ls$ for some integer $s$ such that $s\big|k$ and $\frac{n}{n_1}=\frac{k}{s}$. As a result
  \[n=n_1\cdot \frac{n}{n_1}=\frac{2^ls+1}{3}\cdot \frac{k}{s}=\frac{2^l+\frac{1}{s}}{3}\cdot k\leq \frac{2^l+1}{3}\cdot k\]
  with equality if and only if $s=1$. On the other hand, there is a data set of type A with $n=\frac{2^l+1}{3}\cdot k$, $g_0=2$, $m=1$, $n_1=\frac{2^l+1}{3}$ and $(a,b,c_1)=(2,2,1)$. 
   \item As we observed in the proof of the previous point, we can assume that $g=2$, $m=1$ and then 
   \[n=\frac{2^ls+1}{3}\cdot \frac{k_1k_2}{s}=\frac{2^l+\frac{1}{s}}{3}\cdot k_1k_2\]
   for some integer $s$ such that $s\big|k_1k_2$ and $n_1=\frac{2^ls+1}{3}\in \zz$. By assumption, $s=k_1$ is the smallest positive integer with these properties (for a fixed $g$), hence 
   \[n\leq \frac{2^lk_1+1}{3}\cdot \frac{k_1k_2}{k_1}=\frac{2^lk_1+1}{3}\cdot k_2.\]
   This maximal possible value of $n$ is realised by the data set of type~A with $g_0=2$, $m=1$, $n_1=\frac{2^lk_1+1}{3}$ and $(a,b,c_1)=(2,2,1)$.
  \item If $g$ does not satisfy the assumptions of (1) -- (3), then either $g_0\geq 4$ or $g_0=2$ and $m\geq 2$. In both cases $g>3n$. On the other hand, if $g$ satisfies (4), then there is a data set of type A with $n=\frac{k_1+1}{4}\cdot k_2$, $g_0=2$, $m=2$, $n_1=\frac{k_1+1}{4}$, $n_2=\frac{n}{k}$ and $(a,b,c_1,c_2)=(2,2,1,1)$.
  \item If $g=2^2k$ for an odd integer $k$, then there is a data set of type A with $n=k$, $g_0=4$, $m=0$ and $(a,b)=(2,2)$. It remains to show that $g\geq 4n$ if $g$ does not satisfy assumptions of (1) -- (4). 
 
  As we observed in the proof of the previous point, if $g$ does not satisfy the assumptions of (1) -- (3), then either $g_0\geq 4$ or $g_0=2$ and $m\geq 2$. Moreover, $g_0=4$ implies that $n\leq \frac{g}{4}$, hence assume that $g_0=2$. If $m\geq 3$, then 
  \[g=2n+\sum_{i=1}^m\left(n-\frac{n}{n_i}\right)\geq 2n+3\left(n-\frac{n}{3}\right)= 4n,\]
  so assume that $m=2$. In this case, we have
  \[g=2n+\left(n-\frac{n}{n_1}\right)+\left(n-\frac{n}{n_2}\right)=-\frac{n}{n_2}+\frac{n}{n_1}(4n_1-1).\]
  If we set $k_1=4n_1-1$, $k_2=\frac{n}{n_1}$ and $k=\frac{n}{n_2}$, then we obtain positive odd integers such that $k\leq \frac{n}{3}<\frac{g}{3}$ and 
  \[g+k=\frac{n}{n_1}(4n_1-1)=k_1k_2.\]
  This implies that $g$ satisfies (4).
  \item At this point we know that if $g$ does not satisfies (1) -- (4) then $g\geq 4n$. Moreover, if $g=4n$, then $g$ satisfies (5), so $g>4n$ in case~(6).
   Observe also that $g$ is even and $g\neq 2\mod 6$ (because otherwise $g$ would satisfy the assumptions of (1), (2) or (3)). If $g=6k-2$ for some integer $k>0$, then $k$ must be odd, because otherwise $g$ would satisfy the assumptions of (2). Hence, there is a data set of type A with $n=k$, $g_0=4$, $m=2$, $n_1=n_2=n$ and $(a,b,c_1,c_2)=(2,2,1,1)$. In particular, $N\geq \frac{g+2}{6}>\frac{g}{6}$.
  
  Finally, if $g=0\mod 6$, then $g=2^{2l}\cdot 3k$ for some odd integer $k$ and $l\geq 2$ (because otherwise $g$ would satisfy the assumptions of (2) or (5)). In this case there exists a data set of type A with $n=\frac{2^{2l}\cdot 3+6}{6}\cdot k=(2^{2l-1}+1)k$, $g_0=4$, $m=2$, $n_1=n_2=\frac{n}{3k}$ and $(a,b,c_1,c_2)=(2,2,1,1)$. In particular, $N>\frac{g}{6}$.
 \end{enumerate}
\end{proof}
 \begin{uw}
  It can be checked using a computer (with a GAP \cite{GAP4} program similar to that described in Section 3 of \cite{McCullough2010}) that there are 10 values of $g\leq 500$ for which $N<\frac{g}{4}$ (that is, values of $g$ which do not satisfy the assumptions of cases (1) -- (5) of Theorem \ref{tw:all:degs:a}). 
  \begin{center}
   \begin{tabular}{|c|c|c|}
    \hline
    $g$&$g/N$&Data set for a maximal root\\
    \hline\hline
    $16=2^4$&5.33&$(3,4,(2,2);(1,3),(1,3))$\\ \hline
    $48=2^4\cdot 3$&5.33&$(9,4,(2,2);(1,3),(1,3))$\\ \hline
    $64=2^6$&4.27&$(15,2,(2,2);(1,5),(1,5),(1,3))$\\ \hline    
    $112=2^4\cdot 7$&4.87&$(23,2,(2,2);(1,23),(1,23),(1,23))$\\ \hline    
    $144=2^4\cdot 3^2$&4.97&$(29,4,(2,2);(1,29))$\\ \hline
    $192=2^6\cdot 3$&4.27&$(45,2,(2,2);(1,5),(1,5),(1,3))$\\ \hline    
    $256=2^8$&5.69&$(45,4,(2,2);(1,9),(1,5))$\\ \hline
    $304=2^4\cdot 19$&4.83&$(63,2,(2,2);(1,63),(1,63),(1,7))$\\ \hline        
    $336=2^4\cdot 3\cdot 7$&4.87&$(69,2,(2,2);(1,23),(1,23),(1,23))$\\ \hline    
    $496=2^4\cdot 31$&4.72&$(105,2,(2,2);(1,15),(1,15),(1,7))$\\ \hline
   \end{tabular}
  \end{center}
The above examples indicate that there is no much room to improve the general bound $N>\frac{g}{6}$ of Theorem \ref{tw:all:degs:a}.
 \end{uw}
As we will see in the proof of Theorem \ref{tw:max:b} below, case B is much more involving, because in this case there is an additional arithmetic condition (D4B) that imposes further constraints on the existence of data sets. For this reason we need the following two technical lemmas.
\begin{lem}\label{Lem:tw:b:max}
 Let $n=n_1d$, where $n_1,d\geq 3$ are odd integers. Consider the following system of equations
 \begin{equation}
  \begin{cases}\label{sys:eq:abc}
   \gcd(a,n)=\gcd(b,n)=\gcd(c_1,n_1)=1\\
   b-a=ab\mod n\\
   a+b+c_1d=0\mod n
  \end{cases}
 \end{equation}
 in three variables $a,b,c_1\in \zz$.
 \begin{enumerate}
  \item If $n_1=0\mod 3$ and $d\neq 0\mod 3$, then system \eqref{sys:eq:abc} has no solution.
  \item If $n_1=d=0\mod 3$ or $n_1\neq 0\mod 3$, then system \eqref{sys:eq:abc} has a solution.
 \end{enumerate}
\end{lem}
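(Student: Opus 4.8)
The plan is to analyse system \eqref{sys:eq:abc} by reducing it to a single congruence in one variable. First I would use the second equation $b-a=ab\bmod n$. Since $\gcd(a,n)=1$, $a$ is invertible mod $n$; writing $a^{-1}$ for its inverse, the equation $b-a\equiv ab$ rearranges to $b(1-a)\equiv a$, i.e. $b\equiv a(1-a)^{-1}\bmod n$, provided $\gcd(1-a,n)=1$ — and indeed one checks that if a prime $p\mid n$ divided $1-a$ then from $b(1-a)\equiv a$ we would get $p\mid a$, contradicting $\gcd(a,n)=1$; so $1-a$ is automatically invertible. Thus $b$ is determined by $a$, and one should also record that $\gcd(b,n)=1$ is then automatic (from $b(1-a)\equiv a$ and $\gcd(a,n)=1$). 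Substituting into the third equation, $a+b+c_1d\equiv 0$, gives $a+a(1-a)^{-1}+c_1d\equiv 0\bmod n$, i.e. $a\cdot\frac{(1-a)+1}{1-a}+c_1d\equiv 0$, that is
\[
\frac{a(2-a)}{1-a}+c_1 d\equiv 0\mod n.
\]
Since $d=n/n_1$, reducing this relation modulo $n_1$ kills the $c_1d$ term, yielding a congruence in $a$ alone modulo $n_1$:
\[
a(2-a)\equiv 0 \mod n_1
\]
(after clearing the invertible factor $1-a$). Conversely, given $a$ solving this together with the coprimality constraints, one can solve for $c_1$ mod $n_1$ from the full equation — here one must check $\gcd(c_1,n_1)=1$, which should follow because $c_1d\equiv -(a+b)$ and a prime dividing both $c_1$ and $n_1$ would, chasing back through $b\equiv a(1-a)^{-1}$, force $p\mid a$ or a similar contradiction.

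So the heart of the matter is: for which odd $n_1$ does $a(2-a)\equiv 0\bmod n_1$ admit a solution $a$ with $\gcd(a,n_1)=1$ (and extendable to $\gcd(a,n)=1$)? If $\gcd(a,n_1)=1$ then $a$ is a unit, so the congruence forces $2-a\equiv 0$, i.e. $a\equiv 2\bmod n_1$. Thus essentially the only candidate is $a\equiv 2$, and one needs $\gcd(2,n_1)=1$ — automatic since $n_1$ is odd — together with the ability to lift to an $a$ mod $n$ with $\gcd(a,n)=1$. By CRT it suffices to choose $a\equiv 2\bmod n_1$ and $a$ a unit modulo each prime power of $n$ not already constrained; the only obstruction is at primes $p\mid\gcd(d,n_1)$, where the two modular conditions must be consistent, and the residual condition turns out to be exactly that $3$ behaves well. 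Concretely: plugging $a\equiv 2$ back, $b\equiv a(1-a)^{-1}\equiv 2\cdot(-1)^{-1}\equiv -2$, and the third equation becomes $2-2+c_1d\equiv 0\bmod n$, i.e. $c_1d\equiv 0\bmod n$, forcing $c_1\equiv 0\bmod n_1$ — which violates $\gcd(c_1,n_1)=1$ unless we are more careful. This shows $a\equiv 2$ verbatim mod $n$ fails, and one must instead take $a\equiv 2\bmod n_1$ but a different unit mod the part of $n$ coprime to $n_1$; the tension between "$a\equiv 2\bmod n_1$" and "solvability for a unit $c_1$" is precisely where the $3$-divisibility hypotheses enter.

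The cleanest route, which I expect to be the one the paper takes, is therefore to parametrise: set $a\equiv 2\bmod n_1$ and look for $a$ modulo $n$ so that $c_1:=-(a+b)/d \bmod n_1$ is a unit mod $n_1$, where $b\equiv a(1-a)^{-1}\bmod n$. Writing $a=2+n_1 t$ and expanding, $a+b \bmod n_1 d$ becomes $n_1\cdot(\text{something in }t)$ up to higher-order terms, so $c_1\equiv (\text{that something})\cdot(n_1/d\text{-type factor})\bmod n_1$, and $\gcd(c_1,n_1)=1$ becomes a congruence condition on $t$ modulo the primes dividing $n_1$. Case (1) ($3\mid n_1$, $3\nmid d$): one shows the relevant expression is forced to be $\equiv 0\bmod 3$, making $c_1$ non-invertible mod $3\mid n_1$ — hence no solution. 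Case (2): when $3\nmid n_1$ there is no such obstruction at $3$ and at every other prime $p\mid n_1$ one has enough freedom in $t$ (because $p\ge 5$ gives at least two units to choose from, or $p=3$ doesn't occur); when $3\mid n_1$ and $3\mid d$ the $3$-part condition becomes vacuous in the right way. The main obstacle — and the step deserving genuine care rather than hand-waving — is this bookkeeping at the prime $3$: tracking exactly why $3\mid n_1,\,3\nmid d$ kills invertibility of $c_1$ while $3\mid\gcd(n_1,d)$ does not. I would organise that via CRT, handling the prime $3$ separately from the rest, and verifying the two sub-claims by explicit (but short) residue computations, e.g. exhibiting in case (2) concrete witnesses such as $a\equiv 2, b\equiv -2$ adjusted by a unit, together with a suitable $c_1$, as is in fact done for the maximal data sets in the proof of Theorem \ref{tw:max:b}.
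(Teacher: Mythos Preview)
There is a concrete error that derails the argument. You write: ``Since $d=n/n_1$, reducing this relation modulo $n_1$ kills the $c_1d$ term.'' This is false: $c_1 d$ vanishes modulo $d$, not modulo $n_1$. (Nothing forces $n_1\mid d$; e.g.\ $n_1=3$, $d=5$.) The correct reduction is mod $d$, which gives $a(2-a)\equiv 0\bmod d$ and hence $a\equiv 2\bmod d$ (and then $b\equiv -2\bmod d$). Every subsequent step in your sketch has $n_1$ and $d$ swapped: the parametrisation should be $a=2+d\cdot a_1$ (not $2+n_1 t$), and the remaining analysis lives modulo $n_1$, not modulo $d$. Once this is corrected you are exactly on the paper's track: writing $a=a_1d+2$, $b=b_1d-2$, the system becomes
\[
a_1b_1d+b_1-a_1\equiv 0,\qquad a_1+b_1+c_1\equiv 0\pmod{n_1},
\]
and part (1) follows by reducing these mod $3$ (one finds $a_1\equiv b_1\not\equiv 0$, forcing $d\equiv 0\bmod 3$).

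A second, smaller issue: your claim that $\gcd(c_1,n_1)=1$ ``should follow'' automatically by chasing primes is wrong --- this invertibility is precisely the obstruction, and it \emph{fails} in case (1). Finally, for case (2) you only gesture at a CRT construction; the paper actually carries this out, and it takes genuine work: one separates the primes of $n_1$ into those dividing $d$ (or $d/3$) and those not, prescribes $a_1$ on each piece so that $a_1d\in\{0,-3\}$ modulo the relevant primes, sets $b_1\equiv a_1(a_1d+1)^{-1}$, and then checks all three gcd conditions. Your sketch does not yet contain the idea that makes this go through.
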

\begin{proof}
 If $(a,b,c_1)$ is a solution of \eqref{sys:eq:abc}, then 
 \[\begin{cases}
 b-a=ab\mod d\\
    a+b=-c_1d=0\mod d.
   \end{cases}
\]
These two conditions imply that $(a,b)=(2,-2)\mod d$, so we can assume that
  \begin{equation*}a=a_1d+2,\ b=b_1d-2,\ \text{for some $a_1,b_1\in\{0,1,\ldots,n_1-1\}$}.\end{equation*}
  System \eqref{sys:eq:abc} yields
  \[\begin{cases}
  0=ab-b+a=(a_1b_1d+b_1-a_1)d\mod dn_1\\
       0=a+b+c_1d=(a_1+b_1+c_1)d\mod dn_1.
    \end{cases}
\]
Equivalently,
\begin{equation}\label{eq:pf:b:mxy:set}
 \begin{cases}
 a_1b_1d+b_1-a_1=0\mod n_1\\
  a_1+b_1+c_1=0\mod n_1. 
 \end{cases}
  \end{equation}
 Note that if $a_1=0$ or $b_1=0$, then the first equation of \eqref{eq:pf:b:mxy:set} implies that $a_1=b_1=0$. But then the second equation gives $c_1=0\mod n$, which is a contradiction with \eqref{sys:eq:abc}. Hence $a_1\neq 0$ and $b_1\neq 0$.
 
 If $n_1=0\mod 3$, then the second equation of \eqref{eq:pf:b:mxy:set} implies that $a_1=b_1\neq 0\mod 3$ (otherwise $c_1=0\mod 3$). But then, the first equation of \eqref{eq:pf:b:mxy:set} implies that $d=0\mod 3$. This completes the proof of (1).
 
 Assume now that $n_1=d=0\mod 3$. Define $p=p_1\cdot p_2\cdots p_i$ to be the product of all common prime divisors of $\frac{d}{3}$ and $n_1$, and let $q=q_1\cdot q_2 \cdots q_j$ be the product of all prime divisors of $n_1$ different from $p_1,\ldots,p_i$. If one of these sets of primes is empty, we define $p=1$ or $q=1$ respectively. Let $a_1$ be a solution of the set of congruences 
\[\begin{cases}
   a_1=-\left(\frac{d}{3}\right)^{-1}\mod q\\
   a_1=1\mod p.
  \end{cases}
\]
(If $p=1$ or $q=1$, then we solve only one equation.) It is apparent from this definition that $\gcd(a_1,n_1)=1$ and
\[\begin{cases}
   a_1d=-3\mod q\\
   a_1d=0\mod p.
  \end{cases}
\]
Hence, $\gcd(a_1d+1,n_1)=\gcd(a_1d+2,n_1)=1$ and we can define $b_1$ as $a_1(a_1d+1)^{-1}\mod n_1$. It is straightforward to check that:
\begin{itemize}
\item If $c_1=-(a_1+b_1) \mod n_1$, then 
\[\begin{aligned}&\gcd(-c_1,n_1)=\gcd(a_1+a_1(a_1d+1)^{-1},n_1)\\
&=\gcd(a_1(a_1d+1)+a_1,n_1)=\gcd(a_1(a_1d+2),n_1)=1.
\end{aligned}\]
\item If $a=a_1d+2$ and $b=b_1d-2$, then $\gcd(a,d)=\gcd(b,d)=\gcd(a,n_1)=1$ and 
 \[(a_1d)(b_1d)+b_1d-a_1d=0\mod n_1.\]
 This implies that 
 \[\begin{cases}
   2b_1d=3\mod q\\
   b_1d=0\mod p.
  \end{cases}\]
Hence, $\gcd(b,n_1)=1$ and $\gcd(a,n)=\gcd(b,n)=1$.
 \item $(a_1,b_1,c_1)$ satisfy \eqref{eq:pf:b:mxy:set}.
\end{itemize}
This completes the proof that 
\[(a,b,c_1)=(a_1d+2,b_1d-2,-(a_1+b_1))\] is a solution of \eqref{sys:eq:abc}. 

Finally, if $n_1\neq 0\mod 3$, then we can simplify the above argument by defining $p=p_1\cdot p_2\cdots p_i$ to be the product of all common prime divisors of $d$ and $n_1$, and $q=q_1\cdot q_2 \cdots q_j$ be the product of all prime divisors of $n_1$ different from $p_1,\ldots,p_i$. We then define $a_1$ as a solution of the set of congruences 
\[\begin{cases}
   a_1=-3d^{-1}\mod q\\
   a_1=1\mod p.
  \end{cases}
\]
As before, this gives 
\[\begin{cases}
   a_1d=-3\mod q\\
   a_1d=0\mod p
  \end{cases}
\]
and we can repeat the same argument as in the case of $n_1=d=0\mod 3$. This completes the proof of (2).
\end{proof}
\begin{lem}\label{Lem:tw:b:max2}
 Let $n\geq 3$ be an odd integer. The following system of equations
 \begin{equation}
  \begin{cases}\label{sys:eq:abc2}
   \gcd(a,n)=\gcd(b,n)=\gcd(c_1,n)=1\\
   b-a=ab\mod n\\
   a+b+c_1=0\mod n
  \end{cases}
 \end{equation}
 in three variables $a,b,c_1\in \zz$ has a solution if and only if $n\neq 0\mod 3$.
\end{lem}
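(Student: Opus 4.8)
The plan is to handle the two implications separately, each by an elementary computation. For the \emph{only if} direction I would argue by contradiction: suppose $(a,b,c_1)$ solves \eqref{sys:eq:abc2} and that $3\mid n$. Reducing all three congruences modulo $3$ and using that $a,b,c_1$ are units modulo $3$, each of $a,b,c_1$ is congruent to $\pm 1$ modulo $3$. The only triples of $\pm 1$'s summing to $0$ modulo $3$ are $(1,1,1)$ and $(-1,-1,-1)$, so $a+b+c_1\equiv 0\pmod 3$ forces $a\equiv b\pmod 3$; but then $b-a\equiv 0\pmod 3$ while $ab\equiv 1\pmod 3$, contradicting $b-a\equiv ab\pmod 3$. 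Hence $3\nmid n$.

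For the \emph{if} direction, assume $3\nmid n$; since $n$ is odd we have $\gcd(n,6)=1$, so $2$ is invertible modulo $n$ and $\gcd(3,n)=1$. The key observation I would use is that the middle congruence of \eqref{sys:eq:abc2} can be factored: expanding $(a-1)(b+1)=ab+a-b-1$ shows that $b-a\equiv ab\pmod n$ is equivalent to $(a-1)(b+1)\equiv -1\pmod n$. This makes an explicit choice transparent: take $a=3$, so that $a-1=2$, and choose $b$ with $b+1\equiv -2^{-1}\pmod n$, giving $(a-1)(b+1)\equiv 2\cdot(-2^{-1})\equiv -1\pmod n$ as needed; explicitly $b\equiv -3\cdot 2^{-1}\pmod n$. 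Then set $c_1\equiv -(a+b)\pmod n$ to satisfy the third congruence, and a short computation gives $c_1\equiv -3\cdot 2^{-1}\pmod n$ as well.

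It remains only to verify the coprimality conditions for the triple $(a,b,c_1)=(3,\,-3\cdot 2^{-1},\,-3\cdot 2^{-1})$: indeed $\gcd(a,n)=\gcd(3,n)=1$, and since $2$ is a unit and $b\equiv c_1\equiv -3\cdot 2^{-1}\pmod n$, also $\gcd(b,n)=\gcd(c_1,n)=\gcd(3,n)=1$. This gives the desired solution.

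I do not expect a serious obstacle; the only step requiring a moment's thought is spotting the factored form $(a-1)(b+1)\equiv -1\pmod n$ and recognising that $p=3$ is precisely the prime blocking a solution, since for $p=3$ the set $\{0,1,-1\}$ exhausts $\zz_3$ and hence no admissible value of the unit $a-1$ exists, whereas for every prime $p\ge 5$ dividing $n$ the residue $2$ avoids $\{0,1,-1\}$. One could instead try to deduce the \emph{if} part from the construction in Lemma~\ref{Lem:tw:b:max} specialised to $d=1$, $n_1=n$, but since that lemma is stated only for $d\ge 3$, the short self-contained argument above is preferable.
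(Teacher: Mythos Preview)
Your proof is correct and follows essentially the same route as the paper: the ``only if'' direction is the same mod-$3$ contradiction, and your explicit solution $(3,\,-3\cdot 2^{-1},\,-3\cdot 2^{-1})$ is, up to the substitution $(a,b,c_1)\mapsto(-b,-a,-c_1)$, exactly the paper's $\bigl(\tfrac{n+3}{2},\,-3,\,\tfrac{n+3}{2}\bigr)$. The factorisation $(a-1)(b+1)\equiv -1\pmod n$ is a nice motivational touch that the paper omits.
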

\begin{proof}
 If $(a,b,c_1)$ is a solution of \eqref{sys:eq:abc2}, then clearly $a,b\neq 0\mod n$ and $a\neq b\mod n$. In particular, if $n=0\mod 3$, then $a+b=0\mod 3$ and the third equation of \eqref{sys:eq:abc2} implies that $c_1=0\mod 3$, which is a contradiction.
 
 On the other hand, if $n\neq 0\mod 3$, then 
 \[(a,b,c_1)=\left(\frac{n+3}{2},-3,\frac{n+3}{2}\right)\]
 is a solution of \eqref{sys:eq:abc2}.
\end{proof}
\begin{tw} \label{tw:max:b}
Let $c$ be a nonseparating two-sided circle in a nonorientable surface $N_{2g'+2}$ of genus $2g'+2$, where $g'\geq 2$, such that $N_{2g'+2}\bez c$ is orientable. Let $N$ be the maximal possible degree of a nontrivial root of the Dehn twist $t_c$.
\begin{enumerate}
	   \item If $g'=3k$ for some odd integer $k$, then $N=3g'$.
	   \item If $g'=2k$ for some odd integer $k$, then $N=\frac{5}{2}g'$.
	   \item If $g'=2^2\cdot 3k$ for some odd integer $k$, then $N=\frac{9}{4}g'$.
	   \item If $g'$ does not satisfy (1) and $g'=5k$ for some odd integer $k$, then $N=\frac{11}{5}g'$.
	   \item If $g'=2^3k$ for some odd integer $k$, then $N=\frac{17}{8}g'$.
	   \item If $g'$ does not satisfy (1) nor (4) and $g'=11k$ for some odd integer $k$, then $N=\frac{23}{11}g'$. 
	   \item If $g'=2^4\cdot 3k$ for some odd integer $k$, then $N=\frac{33}{16}g'$.
	   \item If $g'$ does not satisfy (1), (4) nor (6) and $g'=17k$ for some odd integer $k$, then $N=\frac{35}{17}g'$. 	   
	   \item If $g' = l\cdot 3k$, where $l=1\mod 3$, $k$ is odd and $g'$ does not satisfy (1) -- (8), then $\frac{41}{20}g' \geq N \geq\frac{2l+1}{l}\cdot g' > 2g'$. 
	    \item If $g' = l\cdot k$, where $l=2\mod 3$, $k$ is odd and $g'$ does not satisfy (1) -- (9), then $\frac{41}{20}g' \geq N \geq\frac{2l+1}{l}\cdot g' > 2g'$. If additionally $g'$ is prime, then $N=2g'+1$.
           \item If $g'$ does not satisfy (1) -- (10) and $g'=1\mod 6$, then $\frac{5}{4}g'\geq N\geq  g'$. 
           \item If $g'$ does not satisfy (1) -- (11), then $g'=4\mod 12$ and $\frac{5}{4}g'\geq N\geq g'+1$. 
          \end{enumerate}
\end{tw}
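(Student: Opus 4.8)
By Theorem~\ref{tw:orb}, since $N_{2g'+2}\bez c$ is orientable, $N$ equals the largest $n$ for which there exists a type~\TypeB\ data set of genus $2g'$ and degree $n$. The plan is to determine this maximum for each arithmetic pattern of $g'$. The engine is the genus formula $2g'=2g_0n+\sum_{i=1}^m\bigl(n-\tfrac{n}{n_i}\bigr)$ for type~\TypeB\ data sets: since each $n_i$ is an odd integer $>1$, hence $\geq3$, every summand is $\geq\tfrac{2n}{3}$, so $n\leq\tfrac{3g'}{3g_0+m}$. In particular $n>2g'$ forces $g_0=0$ and $m\leq1$ (with $m=0$ impossible, as it gives $g'=0$), $n>g'$ forces $g_0\leq1$, and $g_0=1$ with $m\geq1$ forces $n<g'$. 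This reduces the whole problem to a short list of configurations $(g_0,m)$, and the twelve-case split is the bookkeeping of which one wins.

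The crux is the configuration $g_0=0$, $m=1$. Setting $n_1=2l+1$ and $d=n/n_1$, the genus formula reads $g'=ld$ with $n=(2l+1)d$ and $d$ odd, and Lemmas~\ref{Lem:tw:b:max} and \ref{Lem:tw:b:max2} (the latter covering $d=1$) say that (D2), \CondB, (D4B) can be solved simultaneously exactly when $n_1\not\equiv0\bmod3$ or $3\mid d$, i.e. when $l\not\equiv1\bmod3$ or $3\mid g'/l$. Hence $N>2g'$ precisely when some $l\geq1$ satisfies $l\mid g'$, $g'/l$ odd, and this modular condition; and then $N=\bigl(2+\tfrac1{l_0}\bigr)g'$ with $l_0$ the smallest such $l$, realized by the data set $(n,0,(a,b);(c_1,n_1))$ whose $(a,b,c_1)$ is the triple furnished by Lemma~\ref{Lem:tw:b:max} (or Lemma~\ref{Lem:tw:b:max2}). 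Writing $g'=2^eM$ with $M$ odd, $l_0$ is a multiple of $2^e$, and one checks that $l_0\in\{1,2,4,5,8,11,16,17\}$ in exactly the situations (1)--(8) — for instance $l_0=1$ iff $g'=3\cdot(\text{odd})$, $l_0=4$ iff $g'=2^2\cdot3\cdot(\text{odd})$, $l_0=11$ iff $g'=11\cdot(\text{odd})$ with $3\nmid g'$ and $5\nmid g'$ — yielding the stated exact values $N=\tfrac{2l_0+1}{l_0}g'$. One also verifies that $l_0$ can never take the values $3,6,7,9,10,12,13,14,15,18,19$ (each is undercut by a smaller admissible $l$), so when $g'$ falls outside (1)--(8) but $l_0$ still exists we get $l_0\geq20$ and $2g'<\tfrac{2l_0+1}{l_0}g'=N\leq\tfrac{41}{20}g'$: this is case (9), where $l_0\equiv1\bmod3$ (forcing $3\mid g'$, so $g'=l\cdot3k$), and case (10), where $l_0\equiv2\bmod3$; the sub-claim for prime $g'$ is the instance $l_0=g'$, giving $N=2g'+1$.

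When no admissible $l$ exists we have $N\leq2g'$, and unwinding the divisibility shows this occurs exactly when $e$ is even, $3\nmid M$, and every prime factor of $M$ is $\equiv1\bmod3$ — which forces $g'\equiv1\bmod6$ (if $e=0$, case (11)) and $g'\equiv4\bmod12$ (if $e\geq2$, case (12)). Here one re-examines $g_0=1$ and $m\geq2$: the inequality $n\leq\tfrac{3g'}{3g_0+m}$ kills $g_0=1$ with $m\geq1$ ($n<g'$) and $g_0=0$ with $m\geq3$ ($n\leq g'$), while $g_0=1$, $m=0$ gives the admissible data set $(g',1,(2,-2))$, so $N\geq g'$. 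The only route to exceed $\tfrac54g'$ is $g_0=0$, $m=2$, where $\tfrac1{n_1}+\tfrac1{n_2}>\tfrac25$ leaves only the finitely many pairs $(3,n_2)$ with $n_2<15$; for each of these, the required divisibility and parity, together with \CondB\ and (D4B) read modulo a suitable prime, turn out incompatible with $g'$ lying in case (11) or (12) — again an obstruction of the type proved in Lemma~\ref{Lem:tw:b:max}. Hence $N\leq\tfrac54g'$ in both cases, and in case (12), where $g'+1$ is odd and $\gcd(3,g'+1)=1$, the data set $(g'+1,0,(2,-2);(c_1,g'+1),(c_2,g'+1))$ with $c_1+c_2\equiv0$ shows $N\geq g'+1$.

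The main obstacle is not a single estimate but the combinatorial tension between the genus formula — which prefers as many cone points of order $3$ as possible, to inflate $n$ — and condition (D4B), which through the modular obstruction of Lemma~\ref{Lem:tw:b:max} forbids a cone point of order divisible by $3$ unless $g'$ itself carries a compensating factor of $3$. Tracking which admissible configuration maximizes $n$ is exactly what produces the fine split by the $2$-adic and $3$-adic structure of $g'$; the least routine part is the tail, cases (11)--(12), where one must supply a handful of ad hoc modular obstructions for the $m=2$ configurations that are not already covered by Lemmas~\ref{Lem:tw:b:max} and \ref{Lem:tw:b:max2} and verify the resulting short list of pairs $(n_1,n_2)$ by hand.
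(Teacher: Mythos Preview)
Your sketch is correct and follows essentially the same route as the paper: reduce via the genus formula to the configurations $(g_0,m)\in\{(0,1),(0,2),(1,0)\}$, analyze the dominant case $g_0=0$, $m=1$ through Lemmas~\ref{Lem:tw:b:max} and~\ref{Lem:tw:b:max2}, and for the tail cases (11)--(12) eliminate the few $m=2$ pairs that could exceed $\tfrac{5}{4}g'$. Your parameterization by $l=(n_1-1)/2$ and the search for the minimal admissible $l_0$ is a tidy repackaging of the paper's walk through small values of $n_1$; the paper carries out explicitly the checks you defer (that $l_0\notin\{3,6,7,9,10,12,13,14,15,18,19\}$, and the $\bmod\ 3$ obstructions for the $m=2$ pairs), and your list of $m=2$ pairs $(3,n_2)$ with $n_2<15$ is in fact the complete one --- the paper writes out only $(3,3),(3,5),(3,7)$, but the same $\bmod\ 3$ argument disposes of $(3,9),(3,11),(3,13)$ as well.
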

\begin{proof}
 The genus of a data set in case \TypeB\ satisfies
  \[2g'=n\left(2g_0+\sum_{i=1}^m\left(1-\frac{1}{n_i}\right)\right).\]
  This implies that if $g_0\geq 1$, then $n\leq g'$. Thus, for the rest of the proof assume that $g_0=0$ and therefore $m \geq 1$. 
  But when $m\geq 3$, then 
   \[2g'\geq 3\left(n-\frac{n}{3}\right)=2n,\]
  hence for the rest of the proof we assume that $m=1$ or $m=2$.
  
  Note also that if $m=2$, then
  \begin{equation}2g'\geq 2\left(n-\frac{n}{3}\right)=\frac{4}{3}n,\label{eq:proof:b:max:23}\end{equation}
  which implies that $n\leq \frac{3}{2}g'$. 
  
  Suppose first that $m=1$. Thus
  \begin{equation}\label{eq:pf:b:mx:ab}g'=\frac{1}{2}n\left(1-\frac{1}{n_1}\right)=\frac{n}{n_1}\cdot \frac{n_1-1}{2}=d\cdot \frac{n_1-1}{2},\end{equation}
  where $d=\frac{n}{n_1}$. In particular, for a fixed $g'$, smaller values of $n_1$ yield bigger values of $n$.  
  Let analyse the existence of data sets satisfying \eqref{eq:pf:b:mx:ab} for small values of $n_1$.


If $n_1=3$, then $d>1$ and Lemma \ref{Lem:tw:b:max} implies that $d=0\mod 3$, therefore $g'=3k$ for some odd integer $k$. The same lemma guarantees the existence of $a,b,c_1$ satisfying (D2), (D3B) and (D4B), hence there is a data set of type B with $n=3g'$. This completes the proof of (1).


If $n_1=5$, then \eqref{eq:pf:b:mx:ab} implies that $g'=2k$ for odd $k$. Conversely, if $g'=2k$ for some odd integer $k$, then by Lemmas \ref{Lem:tw:b:max} and \ref{Lem:tw:b:max2} there is a data set of type B with $n=\frac{5}{2}g'$ (we need Lemma \ref{Lem:tw:b:max2} if $g'=2$). This proves (2). 


If $n_1=7$, then \eqref{eq:pf:b:mx:ab} implies that $g'=3d$, which means that $g'$ satisfies the assumptions of (1).



If $n_1=9$, $n_1=11$, $n_1=17$, $n_1=23$, $n_1=33$ or $n_1=35$, then \eqref{eq:pf:b:mx:ab} and Lemma \ref{Lem:tw:b:max} imply that for some odd integer $k$, we have $g'=2^2\cdot 3k$, $g'=5k$, $g'=2^3k$, $g'=11k$, $g'=2^4\cdot 3k$ or $g'=17k$ respectively. Moreover, in each of these cases Lemmas \ref{Lem:tw:b:max} and \ref{Lem:tw:b:max2} give a data set of type \TypeB\ of the desired degree. 


On the other hand, if $n_1\in\{13,15,19,21,25,27,29,31,37,39\}$, then the situation is the same as in the case of $n_1=7$, that is there exists a data set of genus $g'$ with smaller $n_1$ (for $n_1\in\{15,27,39\}$ we use Lemma \ref{Lem:tw:b:max} to show that $g=0\mod 3$). This completes the proof of cases (4) -- (8) and it also proves that 
\begin{equation}\label{eq:pf:b:mx:inq} n\leq \frac{2g'}{1-\frac{1}{41}}=\frac{41}{20}g'\end{equation}
if $g'$ does not satisfy the assumptions of (1) -- (8). 

In fact, Lemmas \ref{Lem:tw:b:max} and \ref{Lem:tw:b:max2} give us a more general result:
\begin{itemize}	
	\item If $g'=l\cdot 3k$ for $l=1\mod 3$ and an odd integer $k$, then there exists a data set of type \TypeB\ with $g_0=0$, $m=1$ and \[n=\frac{2l+1}{l}\cdot g' =(2l+1)\cdot 3k=2g'+3k\]
	(apply Lemma \ref{Lem:tw:b:max} with $n_1=2l+1$ and $d=3k$).
	\item If $g'=l\cdot k$ for $l=2\mod 3$ and an odd integer $k$, then there exists a data set of type \TypeB\ with $g_0=0$, $m=1$ and \[n=\frac{2l+1}{l}\cdot g' =(2l+1)\cdot k=2g'+k\]
	(apply Lemma \ref{Lem:tw:b:max} with $n_1=2l+1$, $d=k$ if $k>1$, and Lemma~\ref{Lem:tw:b:max2} if $k=1$). Moreover, if $g'$ is prime, then $k=1$ and $n=2g'+1$ is the maximal possible degree of a root of $t_c$.
\end{itemize}
 This completes the proof of cases (9) and (10).

Note that applying Lemma \ref{Lem:tw:b:max} or Lemma \ref{Lem:tw:b:max2} with $n_1=2l+1$ for $l=0\mod 3$ yields $g'=l\cdot k = 3^s t \cdot k$ for some $s\geq 1$ and $t\neq 0 \mod 3$ which therefore satisfies (9), when $t=1 \mod 3$, and (10) when $t=2 \mod 3$. In particular, this implies that if $g'$ does not satisfy (1) -- (10), then $m\geq 2$.

Moreover, when $g'$ does not satisfy (1) -- (10) it must hold that $g'=1\mod 3$. That is the case, because:
\begin{itemize}
 \item $g'=2\mod 3$ satisfies the assumptions of (10) with $k=1$;
 \item $g'=3\cdot 2^s\cdot t$ for some odd integers $s$ and~$t$ satisfies (10) with $l=2^s$, $k=3t$;
 \item $g'=3\cdot 2^s\cdot t$ for an even integer $s$ and an odd integer $t$ satisfies the assumptions of (9) with $l=2^s$ and $k=t$. 
\end{itemize}

From now on we assume that $g'$ does not satisfy (1) -- (10), hence we concentrate on the case $m=2$ and $g'=1\mod 3$.
  
If $(n_1,n_2)=(3,3)$, then \eqref{eq:proof:b:max:23} implies that $g'=\frac{2}{3}n$ satisfies the assumptions of (2). Similarly, if $(n_1,n_2)=(3,5)$, then $g'=\frac{11}{15}n$ satisfies assumptions of (1), (4) or (6). 

For $(n_1,n_2)=(3,7)$ we obtain $g'=\frac{16}{21}n=16k$ for some odd integer $k=\frac{n}{21}$ and condition (D3B) yields 
\[b-a=ab\mod 3.\]
This implies that $a\neq b\mod 3$ and condition (D4B) gives
\[a+b+c_1\cdot \frac{n}{3}+c_2\cdot \frac{n}{7} = 0 \mod 3.\]
It follows that $c_1\cdot \frac{n}{3}=0\mod 3$, which proves that $k=\frac{n}{21}=0\mod 3$ (because $\gcd(c_1,n_1)=1$). Hence, $g'$ satisfies the assumptions of condition~(7).

Thus $(n_1,n_2)\not\in\{(3,3),(3,5),(3,7)\}$ and 
%
%
   \[2g'\geq \left(n-\frac{n}{5}\right)+\left(n-\frac{n}{5}\right)=\frac{8}{5}n,\]   
   which proves that $n\leq \frac{5}{4}g'$ for all data sets with $m=2$. 
   
   On the other hand, if $g'$ is odd, then there is a data set of type \TypeB:
   \[\left(g',1,(2,-2);\right),\]
   which gives a root of degree $g'$. This completes the proof of (11).
   
   Finally, if $g'$ does not satisfy (1) -- (11), then $g'=1\mod 3$ and $g'=0\mod 4$, hence $g'=4\mod 12$.
   In such a case, there is a data set 
   \[\left(g'+1,0,(2,-2);(1,g'+1),(-1,g'+1)\right),\]
   which gives a root of degree $g'+1$. This completes the proof of (12).
  \end{proof}
  \begin{uw}
   Using a computer it can be checked that there are 59 values of $g'\leq 500$ satisfying the assumptions of case (11) and for 31 of these values $N=g'$.
   
   As for case (12), there are 24 values of $g'\leq 500$ satisfying the assumptions of this case and only for four of these values: $g'\in \{4,16,64,256\}$ we have $N=g'+1$. The reason for the deficiency of genera with $N=g'+1$ follows from the fact that if $g'=2^lk$, where $l\geq 1$ and $k$ is odd, then there is a data set 
   \[\left(\frac{2^l+1}{2^l}\cdot g',0,(2,-2);(1,k),(-1,k)\right)\]
   of type \TypeB.
   This data set gives a root of degree $n>g'+1$ if $k>1$. There are 19 values of $g'\leq 500$ satisfying the assumptions of (12) for which $N>g'+1$ and for which the above data set gives a root of the maximal degree.
  \end{uw}
  \section{Primary roots of Dehn twists.}\label{sec:primary}
  As was observed by McCullough and Rajeevsarathy (Corollary 3.1 of~\cite{McCullough2010}), for each odd $n>1$ and $g+1>\frac{(n-2)(n-1)}{2}$ in the mapping class group $\mathcal{M}(S_{g+1})$ there exists a primary root of degree~$n$ of the Dehn twist $t_c$ about a nonseparating circle in $S_{g+1}$. As we will show below, a similar statement holds in the nonorientable case too.
  
  Following \cite{McCullough2010}, we say that a root of a Dehn twist is \emph{primary} if in the corresponding data set we have $n_i=n$ for $i = 1, \ldots, m$. In particular, if $n$ is prime, then all roots of degree $n$ are primary.
  \begin{tw}\label{tw:allrots:a}
   Let $c$ be a nonseparating two-sided circle in a nonorientable surface $N_{g+2}$ of genus $g+2$,
   such that $N_{g+2}\bez c$ is nonorientable, and let $n\geq 3$ be an odd integer.
   \begin{enumerate}
    \item If $g>(n-1)^2$, then there is a primary root of degree $n$ of the Dehn twist $t_c$.
    \item If $g=(n-1)^2$ or $g<n$, then there is no primary root of $t_c$ of degree~$n$.
   \end{enumerate} 
  \end{tw}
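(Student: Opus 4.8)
The plan is to combine Theorem~\ref{tw:orb} with the Sylvester--Frobenius theorem. By Theorem~\ref{tw:orb}, since $N_{g+2}\bez c$ is nonorientable, a root of $t_c$ of degree $n$ corresponds to a data set of genus $g$, degree $n$, and type~\TypeA, and such a root is primary exactly when the corresponding data set has all $n_i=n$. I would first remark that being primary is invariant under the equivalence relation on type~\TypeA\ data sets, since none of the allowed moves alters the multiset $\{n_1,\ldots,n_m\}$, so it is legitimate to look for a data set with all $n_i=n$ directly. For a data set with all $n_i=n$ the genus formula degenerates to
\[g=g_0n+\sum_{i=1}^m(n-1)=g_0n+m(n-1),\qquad g_0\geq 1,\ m\geq 0.\]
The remaining conditions put no restriction whatsoever: the pair $(a,b)=(2,2)$ satisfies (D2) and the ``$+$'' alternative of (D3A) because $2+2=2\cdot 2$ and $\gcd(2,n)=1$ (recall $n$ is odd), each $c_i=1$ satisfies $\gcd(c_i,n)=1$, and type~\TypeA\ has no analogue of (D4B). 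Hence the whole statement reduces to the purely arithmetic question: for which $g$ can one write $g=g_0n+m(n-1)$ with $g_0\geq 1$ and $m\geq 0$, equivalently, for which $g$ is $g-n$ a nonnegative integer combination of $n-1$ and $n$?

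Now I would invoke the Sylvester--Frobenius theorem: since $\gcd(n-1,n)=1$ and $n-1\geq 2$ (as $n\geq 3$), the largest integer that is \emph{not} a nonnegative combination of $n-1$ and $n$ is the Frobenius number $(n-1)n-(n-1)-n=n^2-3n+1$. For part~(1): if $g>(n-1)^2$ then $g-n\geq (n-1)^2+1-n=n^2-3n+2>n^2-3n+1$, so $g-n$ is representable; choosing $m$ and $g_0\geq 1$ accordingly gives the data set $(n,g_0,(2,2);(1,n),\ldots,(1,n))$ with $m$ entries, which is primary of genus $g$, and Theorem~\ref{tw:orb} yields the desired root. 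For part~(2): if $g<n$ then $g_0n+m(n-1)\geq g_0n\geq n>g$ for all admissible $g_0,m$, so no primary type~\TypeA\ data set of genus $g$ exists; and if $g=(n-1)^2$ then $g-n=n^2-3n+1$ is exactly the Frobenius number, hence not representable, so again no primary data set of genus $g$ exists and no primary root of degree $n$ can exist.

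I expect the argument to be short. The only places that call for care are the reduction step — in particular verifying that in type~\TypeA\ the coefficients $a,b,c_i$ never obstruct the construction (the choice $(2,2)$ and $c_i=1$, together with the absence of a (D4B)-type condition), and the invariance of ``primary'' under data-set equivalence — and the bookkeeping of the shift by $n$ coming from $g_0\geq 1$, which is precisely what converts the threshold $(n-1)^2$ into the Frobenius number of the pair $\{n-1,n\}$. I do not foresee a genuine obstacle beyond these routine checks.
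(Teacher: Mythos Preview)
Your proposal is correct and follows the same line as the paper: both reduce, via Theorem~\ref{tw:orb} and the observation that $(a,b)=(2,2)$, $c_i=1$ always satisfies (D2) and (D3A), to the purely arithmetic question of representing $g$ as $g_0 n + m(n-1)$ with $g_0\geq 1$, $m\geq 0$. The only difference is cosmetic: the paper tabulates the values $g=(g_0+m)(n-1)+g_0$ by hand, whereas you shift by $n$ and invoke the Sylvester--Frobenius number of $\{n-1,n\}$, which packages the same computation more succinctly.
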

\begin{proof}
 Observe first that for each $g_0\geq 1$ and $m\geq 0$ there exists a primary data set of type \TypeA. In fact, we can always take $a=b=2$ and $c_i=1$, $i=1,\ldots,m$. 
 The genus of a primary data set of type \TypeA\ is given by the formula
 \[g=g_0n+m(n-1)=(g_0+m)(n-1)+g_0=(g_0+m)n_0+g_0,\]
 where $n_0=n-1$. 
 
 We collect all possible values of $g$ (for a fixed $n$) in a table, where subsequent rows correspond to values of $g_0\in\{1,2,\ldots,(n-1)\}$, and elements of each row increase according to increasing values of $m\in\{0,1,2,\ldots\}$. The first three columns of such a table are as follows.
 \[\begin{matrix}
    &n_0+1,& 2n_0+1,& 3n_0+1,\\
    &2n_0+2,& 3n_0+2,& 4n_0+2,\\
    &\ldots&\ldots&\ldots\\
    &(n-3)n_0+(n-3),& (n-2)n_0+(n-3),& \underline{(n-1)n_0+(n-3)},\\
    &(n-2)n_0+(n-2),& \underline{(n-1)n_0+(n-2)},& nn_0+(n-2),\\
    &\underline{(n-1) n_0+(n-1)},& nn_0+(n-1),&(n+1)n_0+(n-1).
   \end{matrix}
\]
It should be clear from the above table that for each $l\geq 1$ there exists a primary data set of type \TypeA\ with $g=(n-1)n_0+l$. Moreover, there is no primary data set with $g=(n-1)^2=(n-2)n_0+(n-1)$ and there are no primary data sets with $g<n_0+1=(n-1)+1=n$.
\end{proof}
 \begin{tw}\label{tw:allrots:b}
 Let $c$ be a nonseparating two-sided circle in a nonorientable surface $N_{2g'+2}$ of genus $2g'+2$,  $g'>0$,
 such that $N_{2g'+2}\bez c$ is orientable. Let $n\geq 3$ be an odd integer.
   \begin{enumerate}
    \item Assume that $n\neq 0\mod 3$. 
    \begin{itemize}
    \item[--] If $g'\geq \frac{(n-3)(n-1)}{2}$, then there is a primary root of degree $n$ of the Dehn twist $t_c$.
\item[--]  If $g'=\frac{(n-3)(n-1)}{2}-1$ or $g'<\frac{n-1}{2}$, then there is no primary root of $t_c$ of degree $n$.
\end{itemize}    
    \item Assume that $n=0\mod 3$. 
    \begin{itemize}
    \item[--] If $g'\geq \frac{(n-3)(n-1)}{2}$ and $g'\neq \frac{n^2-2n-1}{2}$, then there is a primary root of degree $n$ of $t_c$.
\item[--] If $g'=\frac{(n-3)(n-1)}{2}-1$, $g'=\frac{n^2-2n-1}{2}$ or $g'<n-1$, then there is no primary root of $t_c$ of degree $n$.
\end{itemize}
   \end{enumerate} 
  \end{tw}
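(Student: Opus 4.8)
The plan is to convert the assertion, via Theorem~\ref{tw:orb}, into a problem about a two–generator numerical semigroup, following the scheme of the proof of Theorem~\ref{tw:allrots:a}. A primary root of degree~$n$ of $t_c$ in case~\TypeB\ corresponds to a type~\TypeB\ data set $(n,g_0,(a,b);(c_1,n),\dots,(c_m,n))$ of genus $2g'$, and since $n_i=n$ for every $i$ this amounts to: non-negative integers $g_0,m$ with $2g'=2g_0n+m(n-1)$, i.e. $g'=g_0n+m\cdot\tfrac{n-1}{2}$, together with residues $a,b$ and $c_1,\dots,c_m$ satisfying $\gcd(a,n)=\gcd(b,n)=\gcd(c_i,n)=1$, $\;b-a\equiv ab\pmod n$, and $\;a+b+\sum_{i=1}^{m}c_i\equiv 0\pmod n$. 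So the first step is simply to record this reformulation: $g'$ admits a primary root of degree~$n$ iff some pair $(g_0,m)$ with $g'=g_0n+m\cdot\tfrac{n-1}{2}$ can be completed by suitable $a,b,c_i$.

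The second step settles for which $(g_0,m)$ such $a,b,c_i$ exist, the answer being ``all of them, except when $m=1$ and $3\mid n$''. If $m=0$ one takes $(a,b)=(2,-2)$, which fulfils \CondB\ and (D4B) for every odd~$n$. If $m\ge 2$ one again takes $(a,b)=(2,-2)$ and needs units $c_1,\dots,c_m$ modulo~$n$ with sum~$0$; this works because for odd~$n$ every residue is a sum of two units (for each prime $p\mid n$ at most two residues out of $p\ge 3$ are excluded, then use the Chinese Remainder Theorem), so set $c_3=\dots=c_m=1$ and split the remainder. If $m=1$, then $\gcd(1-a,n)=1$ is forced, hence $b\equiv a(1-a)^{-1}$ and $a+b\equiv a(2-a)(1-a)^{-1}$, and (D4B) requires $\gcd(a+b,n)=1$, equivalently $\gcd\big(a(1-a)(2-a),n\big)=1$; such $a$ exists iff no prime divisor of~$n$ is~$3$. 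This is exactly the split between parts (1) and (2).

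The third step is the arithmetic of $g'=g_0n+m\cdot\tfrac{n-1}{2}$. Writing $v=\tfrac{n-1}{2}$, one has $\gcd(n,v)=1$ since any common divisor divides $\gcd(n,n-1)=1$. For $n\not\equiv 0\pmod 3$ the admissible genera are exactly the numerical semigroup $\langle n,v\rangle$, with Frobenius number $nv-n-v=\tfrac{(n-3)(n-1)}{2}-1$ and least positive element $v=\tfrac{n-1}{2}$; this gives both bullets of~(1). For $n\equiv 0\pmod 3$ one must delete from $\langle n,v\rangle$ the genera representable \emph{only} with $m=1$. As all representations of a fixed $x\in\langle n,v\rangle$ agree on $m$ modulo~$n$ (because $(g_0-g_0')n=(m'-m)v$ with $\gcd(n,v)=1$), these are precisely $x=v+kn$ with $0\le k\le v-1$, the bound being the condition that the $m=1+n$ representation have negative $g_0$, i.e. $x<(n+1)v=\tfrac{n^2-1}{2}$; their maximum is $v+(v-1)n=\tfrac{n^2-2n-1}{2}$, and a short inequality shows this is the only one of them that is $\ge\tfrac{(n-3)(n-1)}{2}$ (the next, $\tfrac{n^2-4n-1}{2}$, lies below that bound). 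Since $v$ itself is of this form and every genus in $[1,n-2]$ other than $v$ fails to lie in $\langle n,v\rangle$, part~(2) follows.

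The step I expect to be the main obstacle is the bookkeeping when $3\mid n$: one has to track simultaneously which residues $\langle n,v\rangle$ attains, which of those are confined to the forbidden $m=1$ slot, and where the two distinguished non-existence values $\tfrac{(n-3)(n-1)}{2}-1$ and $\tfrac{n^2-2n-1}{2}$ sit relative to the bound $\tfrac{(n-3)(n-1)}{2}$; the degenerate small cases (where $v$ is tiny, e.g. $n=3$) also deserve a separate check. By comparison, the sum-of-two-units fact used for $m\ge 2$ and the ``avoid $0,1,2$ modulo every prime'' fact used for $m=1$ are routine once stated carefully.
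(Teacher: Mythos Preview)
Your proposal is correct and follows the same three–step strategy as the paper: translate via Theorem~\ref{tw:orb} into the genus formula $g'=g_0n+m\cdot\frac{n-1}{2}$, determine for which pairs $(g_0,m)$ the residues $a,b,c_i$ can be filled in, and then analyse which values of $g'$ arise. The differences are in packaging rather than substance. Where the paper arranges the attainable genera in a table indexed by $g_0$ and $m$ and reads off the pattern by inspection, you invoke the Frobenius number of $\langle n,\tfrac{n-1}{2}\rangle$ directly, which is cleaner and makes the bound $\frac{(n-3)(n-1)}{2}-1$ transparent. For $m\ge 2$ the paper simply exhibits $c_i\in\{-4,-2,-1,1\}$ summing to~$0$, while you prove the more general fact that any residue modulo an odd~$n$ is a sum of two units; both work, yours being slightly less ad hoc. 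For $m=1$ the paper cites Lemma~\ref{Lem:tw:b:max2}, whereas you re-derive it via the product $a(1-a)(2-a)$; these are the same argument. Your handling of the $3\mid n$ case, identifying the ``only $m=1$'' genera as $v+kn$ with $0\le k\le v-1$ and checking that only the top one exceeds $\frac{(n-3)(n-1)}{2}$, is exactly the content hidden in the paper's table, made explicit.
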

  \begin{proof}
   Observe first that for each $g_0\geq 0$ and $m\geq 2$ we can construct a data set of type B with $(a,b)=(2,-2)$ and all $c_i\in\{-4,-2,-1,1\}$ (we choose them so that they satisfy (D4B)). There are also data sets with $m=0$ and $(a,b)=(2,-2)$ provided $g_0\geq 1$ ($g_0=0$ corresponds to the exceptional case of the Klein bottle), 
   whereas from Lemma \ref{Lem:tw:b:max2} we know that primary data sets of type \TypeB\ with $m=1$ exists if and only if $n\neq 0 \mod 3$.
   
    The genus of a primary data set of type \TypeB\ is given by the formula
 \[g'=g_0n+\frac{1}{2}m(n-1)=(2g_0+m)\cdot \frac{n-1}{2}+g_0=(2g_0+m)n_0+g_0,\]
 where $n_0=\frac{n-1}{2}$.
 
 We collect all possible values of $g'$ (for a fixed $n$) in a table, where subsequent rows correspond to values of $g_0\in\left\{0,1,2,\ldots,\frac{n-3}{2}\right\}$, and elements of each row increase according to increasing values of $m\in\{0,1,2,\ldots\}$. The first three columns of such a table are as follows.
 \[\begin{matrix}
    &0\cdot n_0+0,& n_0+0,& 2n_0+0,\\
    &2 n_0+1,& 3n_0+1,& 4n_0+1,\\
    &\ldots&\ldots&\ldots\\
    &(n-5)n_0+\frac{n-5}{2},& (n-4)n_0+\frac{n-5}{2},& \underline{(n-3)n_0+\frac{n-5}{2}},\\    
    &\underline{(n-3)n_0+\frac{n-3}{2}},& (n-2)n_0+\frac{n-3}{2},&(n-1)n_0+\frac{n-3}{2}.
   \end{matrix}
\]
It should be clear from the above table, that for each $l\geq 0$ and 
\[g'\neq (n-2)n_0+\frac{n-3}{2}=\frac{n^2-2n-1}{2}\]
there exists a primary data set of type \TypeB\ with $g=(n-3)n_0+l$. As for $g=\frac{n^2-2n-1}{2}$, a root of this degree exists if and only if $n\neq 0\mod 3$. Moreover, there is no primary data set with 
\[g'=(n-3)n_0-1=(n-4)n_0+\frac{n-3}{2},\] and there are no primary data sets with $g'<n_0=\frac{n-1}{2}$. There is also no data set with $g'=n_0$ if $n=0\mod 3$. 
  \end{proof}
  As was observed in \cite{McCullough2010} and \cite{Monden-roots}, Dehn twists about nonseparating circles in an orientable surface of genus $g+1\geq 2$ always have a root of degree 3 in $\mathcal{M}(S_{g+1})$. It turns out that a similar statement holds true also in the nonorientable case.
  \begin{wn}\label{wn:deg3}
   Let $c$ be a nonseparating two-sided circle in a nonorientable surface $N_g$. If a Dehn twist $t_c$ has a nontrivial root, then it has a root of degree 3.
  \end{wn}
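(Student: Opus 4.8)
The plan is to reduce the statement to the classification of Theorem~\ref{tw:orb} and then, in each of the two possible types, exhibit a data set of degree $3$ whose genus equals that of the ambient surface. Since $3$ is prime, \emph{every} root of degree $3$ is primary, so Theorems~\ref{tw:allrots:a} and~\ref{tw:allrots:b} applied with $n=3$ carry most of the load; combined with Theorem~\ref{tw:root:existence}, which tells us exactly for which genera a nontrivial root exists at all, this leaves only one small genus to treat by hand.

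Write the ambient surface as $N_{h+2}$, so that data sets have genus $h$, and split according to whether $N_{h+2}\bez c$ is nonorientable (\emph{case~\TypeA}) or orientable (\emph{case~\TypeB}). In case~\TypeA, Theorem~\ref{tw:root:existence}(1) says that a nontrivial root exists precisely when $h=3$ or $h\geq 5$. If $h\geq 5$ then $h>(3-1)^2=4$, so Theorem~\ref{tw:allrots:a}(1) with $n=3$ provides a (primary) root of degree $3$. The value $h=3$ is not covered by Theorem~\ref{tw:allrots:a}, so there I would point directly to the type~\TypeA\ data set $\left(3,1,(2,2);\right)$: it satisfies (D1) and (D2), it satisfies (D3A) because $g_0=1$ and $2+2\equiv 2\cdot 2\pmod 3$, and its genus is $g_0\cdot 3=3$; by Theorem~\ref{tw:orb} it corresponds to a root of $t_c$ of degree $3$.

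In case~\TypeB\ we have $h=2h'$ and, by Theorem~\ref{tw:root:existence}(2), a nontrivial root exists precisely when $h'\geq 2$. Since $n=3\equiv 0\pmod 3$ we are in part~(2) of Theorem~\ref{tw:allrots:b}, and for $n=3$ one computes $\tfrac{(n-3)(n-1)}{2}=0$ and $\tfrac{n^2-2n-1}{2}=1$. Hence every $h'\geq 2$ satisfies both $h'\geq \tfrac{(n-3)(n-1)}{2}$ and $h'\neq \tfrac{n^2-2n-1}{2}$, so Theorem~\ref{tw:allrots:b}(2) yields a (primary) root of degree $3$. This exhausts all cases.

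There is no real obstacle here; the argument is essentially bookkeeping once Theorems~\ref{tw:root:existence}, \ref{tw:allrots:a} and~\ref{tw:allrots:b} are available. The only points requiring care are the two places where the range in the existence theorem and the ranges in the primary-root theorems fail to coincide at the smallest genus: Theorem~\ref{tw:allrots:a} is silent about $h=3$ with $n=3$ although a root does exist there, which forces the explicit data set above, and one must check — as done — that the exceptional genus $\tfrac{n^2-2n-1}{2}$ in Theorem~\ref{tw:allrots:b}(2) equals $1$ when $n=3$ and hence never collides with the range $h'\geq 2$ coming from Theorem~\ref{tw:root:existence}.
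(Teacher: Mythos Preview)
Your proposal is correct and follows essentially the same approach as the paper's own proof: both split into cases~\TypeA\ and~\TypeB, invoke Theorems~\ref{tw:root:existence}, \ref{tw:allrots:a}, \ref{tw:allrots:b}, and supply the explicit data set $\left(3,1,(2,2);\right)$ for the single uncovered genus $h=3$ in case~\TypeA. Your treatment of case~\TypeB\ is in fact more explicit than the paper's, which simply declares it an ``immediate consequence'' without writing out the computation $\tfrac{n^2-2n-1}{2}=1$.
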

  \begin{proof}
   Suppose first that $N_g\bez c$ is nonorientable. By Theorem \ref{tw:allrots:a}, if $g\geq 5$ then $t_c$ has a root of degree 3. Moreover, there is a data set of type \TypeA\ with $g_0=1$, $m=0$ and $(a,b)=(2,2)$ which defines a root of degree 3 of $t_c$ if $g=3$. By Theorem \ref{tw:root:existence}, this completes the proof in case \TypeA.
   
   If $N_g\bez c$ is orientable, then the proof is an immediate consequence of Theorems \ref{tw:root:existence} and \ref{tw:allrots:b}.     
  \end{proof}
\section{Expressions for primary roots.}\label{sec:geom:express}
In this section we construct primary roots of Dehn twists of all possible degrees. To be more precise, recall from the previous section, that the genus of a primary data set of type A and degree $n$ is given by the formula
\begin{equation}\label{eq:genusA_n}
 g=g_0n+m(n-1), 
\end{equation}
where $g_0 \geq 1$ and $m \geq 0$. Similarly, if $g=2g'$ is the genus of a primary data set of type B and degree $n$, then 
\begin{equation}\label{eq:genusB_n}
 g'=g_0n+\frac{1}{2}m(n-1), 
\end{equation}
where $g_0\geq 0$ and $m\geq 0$. Our goal is to explicitly construct examples of such roots for all possible values of $n$, $g_0$ and $m$. By an 'explicit construction' we mean an expression as a product of Dehn twists on $N_{g+2}$ --- note that by Proposition \ref{Prop:odd:degree}, each root $h$ of a Dehn twist on $N_{g+2}$ is a product of twists, that is, $h$ is an element of the \emph{twist subgroup} $\mathcal{T}(N_{g+2})$ of $\mathcal{M}(N_{g+2})$.

In the oriented case, for $n=3$, a similar construction was carried out by Monden \cite{Monden-roots}. His construction is based on the star and the chain relations, but it turns out that these relations are not enough if we want to construct roots with $n>3$. In such a case we need more general relations and this is exactly the reason for introducing the trident relation (Proposition \ref{tw:trident}).

%
%
\subsection*{Roots of Dehn twists of type $\bm{A}$}
Fix $g_0\geq 1$, $m\geq 0$, an odd integer $n\geq 3$ and consider a surface $N$ which is decomposed as a sum of $g_0$ nonorientable surfaces $N_{n,2}$ of genus $n$ with two boundary components and $m$ orientable surfaces $S_{\frac{n-1}{2},2}$ of genus $\frac{n-1}{2}$ with two boundary components (Figure \ref{fig:A_decomp_n}). 
\begin{figure}[h]
			\begin{center}
				\includegraphics[width=1\textwidth]{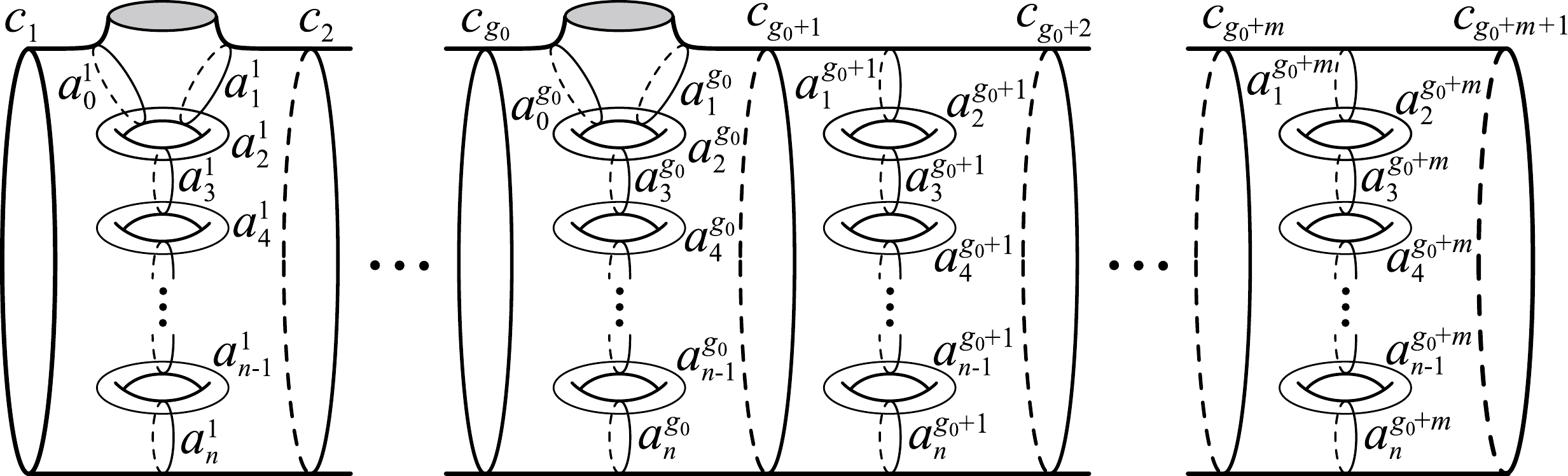}
				\caption{A decomposition of $N_{g+2} \bez c = N_{g,2}$ into $g_0$ subsurfaces $N_{n,2}$ and $m$ subsurfaces $S_{\frac{n-1}{2},2}$.}
				\label{fig:A_decomp_n} 
			\end{center}
		\end{figure} 
		
By \eqref{eq:genusA_n}, $N$ is a nonorientable surface of genus $g$ and has two boundary components. Let 
\[\begin{aligned}
&c_1,c_2,\ldots,c_{g_0},c_{g_0+1},\ldots,c_{g_0+m},c_{g_0+m+1},\\
&a_{0}^i,a_{1}^i,a_2^i,\ldots,a_{n}^i,\text{ for $i=1,\ldots,g_0$,}\\
&a_1^i,a_2^i,\ldots,a_{n}^i,\text{ for $i=g_0+1,\ldots,g_0+m$}
\end{aligned}\]
be circles as in Figure \ref{fig:A_decomp_n}, that is $c_i$ and $c_{i+1}$ bound a nonorientable subsurface $N_{n,2}$ of genus $n$ if $i=1,\ldots,g_0$, and they bound an oriented subsurface $S_{\frac{n-1}{2},2}$ of genus $\frac{n-1}{2}$ if $i=g_0+1,\ldots,g_0+m$.
Let 
\[\begin{aligned}
\delta_i&=t_{{a_0^{i}}}t_{{a_1^{i}}}t_{{a_2^{i}}}\ldots t_{{a_{n}^{i}}},\text{ for $i=1,\ldots,g_0$,}\\
\delta_i&=t_{a_1^{i}}^2t_{a_2^{i}}t_{a_3^{i}}\ldots t_{a_{n}^{i}}, \text{ for $i=g_0+1,\ldots,g_0+m$.}
  \end{aligned}
\]
By Proposition \ref{chain} and Corollary \ref{wn:trident},
\[\delta_i^n=t_{c_{i}}t_{c_{i+1}},\text{ for $i=1,\ldots,g_0+m$,}\]
Hence, if 
\[\zeta=\begin{cases}
     \delta_1\delta_2^{-1}\delta_3\delta_4^{-1}\cdots \delta_{g_0+m-1}\delta_{g_0+m}^{-1} &\text{if $g_0+m$ is even}\\
     \delta_1\delta_2^{-1}\delta_3\delta_4^{-1}\cdots \delta_{g_0+m-1}^{-1}\delta_{g_0+m} &\text{if $g_0+m$ is odd,}
    \end{cases}
\]
then
\[\zeta^n=\begin{cases}
          t_{c_1}t_{c_{g_0+m+1}}^{-1}&\text{if $g_0+m$ is even}\\
          t_{c_1}t_{c_{g_0+m+1}}&\text{if $g_0+m$ is odd.}
         \end{cases}
\]
Let $N_{g+2}$ be a closed surface obtained from $N=N_{g,2}$ by gluing $c_1$ to $c_{g_0+m+1}$ with such a choice of orientations that the element $\xi\in\mathcal{M}(N_{g+2})$ induced by $\zeta\in\mathcal{M}(N_{g,2})$ satisfies $\xi^n=t_c^2$, where $c$ is a circle in $N_{g+2}$ obtained from~$c_1$ in $N_{g,2}$. Since $\xi$ commutes with $t_c$, we obtain that  
\[h=\xi^{\frac{n+1}{2}}t_c^{-1}\]
is a desired root of degree $n$ of $t_c$. 
\begin{uw}
From the proof of Proposition \ref{tw:trident} it follows that for each $i=g_0+1,\ldots,g_0+m$, $\delta_i$ has one fixed point in the corresponding subsurface $S_{\frac{n-1}{2},2}$ around which it locally acts like a rotation through an angle \mbox{$ \frac{2\pi}{n}\cdot \frac{n-1}{2}$} (this can also be deduced from \cite{MargSchleim}). Furthermore, for each $i=1,\ldots,g_0$, $\delta_i$ has no fixed points in the corresponding subsurface $N_{n,2}$. Hence, 
$\zeta$ has $m$ fixed points in $N_{g,2}$ 
and locally around them it acts like a rotation through an angle $\pm \frac{2\pi}{n}\cdot \frac{n-1}{2}$. Therefore $h$ constructed above corresponds to the data set
\[(n, g_0, (2,\pm2); (\pm4,n),(\pm4,n), \ldots, (\pm4,n)).\]
\end{uw}

%
%

\begin{uw}
Let us mention that there is a different natural construction of a root of $t_c$ which has equivalent data set to the data set of a root $h$ defined above. We start from the same decomposition of $N$ as a connected sum of $g_0$ nonorientable surfaces $N_{n,2}$ of genus $n$ and $m$ orientable surfaces $S_{\frac{n-1}{2},2}$ of genus $\frac{n-1}{2}$, but now we use a different model for surfaces $N_{n,2}$ -- we represent each of them as an annulus with $n$ crosscaps $\mu_1^i,\mu_2^i,\ldots,\mu_n^i$, $i=1,2,\ldots,g_0$ \linebreak (Figure \ref{fig:A_decomp_n2}).
 \begin{figure}[h]
			\begin{center}
				\includegraphics[width=1\textwidth]{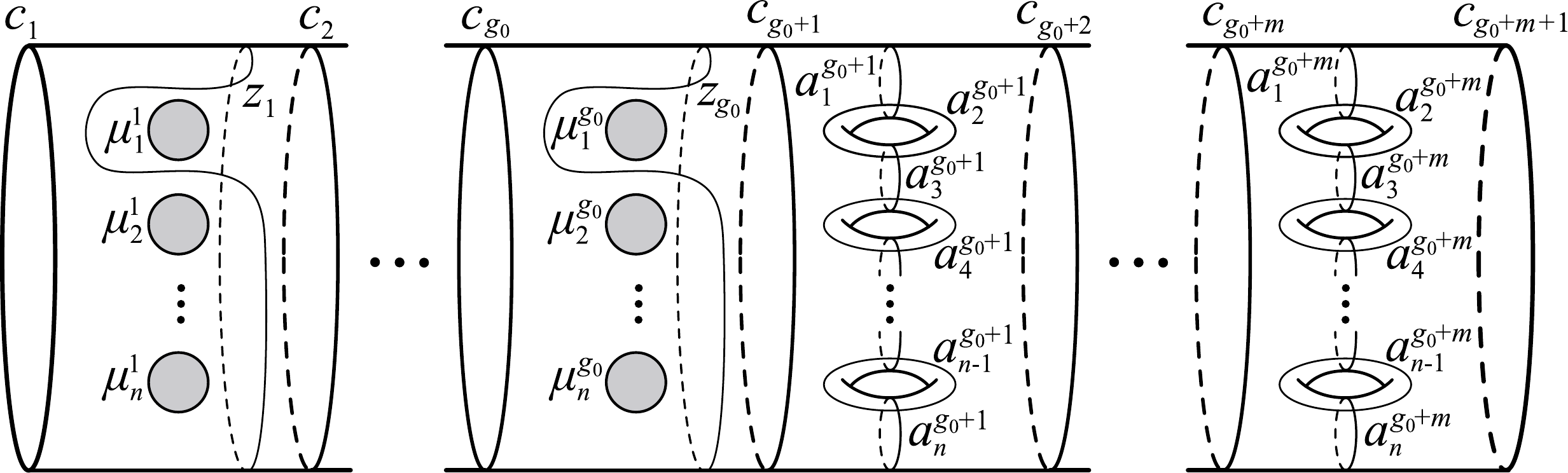}
				\caption{A decomposition of $N_{g+2} \bez c = N_{g,2}$ into $g_0$ subsurfaces $N_{n,2}$ and $m$ subsurfaces $S_{\frac{n-1}{2},2}$.}
				\label{fig:A_decomp_n2} 
			\end{center}
		\end{figure} 
Let $u_{j,i}$ be a \emph{crosscap transposition} of $\mu_j^i$ and $\mu_{j+1}^i$, $i=1,2,\ldots,g_0$, $j=1,2,\ldots,n-1$ (for a definition see for example Section 2 of \cite{ParlakStukowRoots}) and let
\[\eta_i=t_{c_{i+1}}^{-1}t_{z_i}u_{1,i}u_{2,i}\cdots u_{n-1,i}, \text{ for $i=1,\ldots,g_0$,}\]
where $z_i$ is as in Figure \ref{fig:A_decomp_n2}. Geometrically, $\eta_i$ is a natural cyclic rotation of $n$ crosscaps on $N_{n,2}$, so it does not have any fixed points and
\[\eta_i^n=t_{c_i}t_{c_{i+1}}^{-1}, \text{ for $i=1,\ldots,g_0$,}.\]
Thus if 
\[\zeta=\begin{cases}
     \eta_1\eta_2\cdots\eta_{g_0}\delta_{g_0+1}\delta_{g_0+2}^{-1}\delta_{g_0+3}\cdots  \delta_{g_0+m-1}\delta_{g_0+m}^{-1} &\text{if $m$ is even}\\
     \eta_1\eta_2\cdots\eta_{g_0}\delta_{g_0+1}\delta_{g_0+2}^{-1}\delta_{g_0+3}\cdots  \delta_{g_0+m-1}^{-1}\delta_{g_0+m} &\text{if $m$ is odd,}
    \end{cases}
\]
then
\[\zeta^n=\begin{cases}
          t_{c_1}t_{c_{g_0+m+1}}^{-1}&\text{if $m$ is even}\\
          t_{c_1}t_{c_{g_0+m+1}}&\text{if $m$ is odd}
         \end{cases}
\]
and we can complete the construction of a root as before.

In particular, when $g=n$ is odd (hence $g_0=1$, $m=0$), then
Theorem~\ref{tw:orb} implies that the natural cyclic rotation $u_{1,1}u_{2,1}\cdots u_{g-1,1}$ of $g$ crosscaps on~$N_g$ is conjugate to $t_{a_0^1}t_{a_1^1}t_{a_2^1}\cdots t_{a_g^1}$ in $\mathcal{M}(N_g)$.
\end{uw}
\subsection*{Roots of Dehn twists of type $\bm{B}$}
Fix $g_0\geq 0$, $m\geq 0$, an odd integer $n\geq 3$ and consider a surface $S$ decomposed as a sum of $g_0$ oriented surfaces $S^1,S^2,\ldots,S^{g_0}$ of genus $n$ with two boundary components and $m$ oriented surfaces $S^{g_0+1},S^{g_0+2},\ldots, S^{g_0+m}$ of genus $\frac{n-1}{2}$ with two boundary components (Figure \ref{fig:B_decomp_n}). 
\begin{figure}[h] \begin{center}
				\includegraphics[width=1\textwidth]{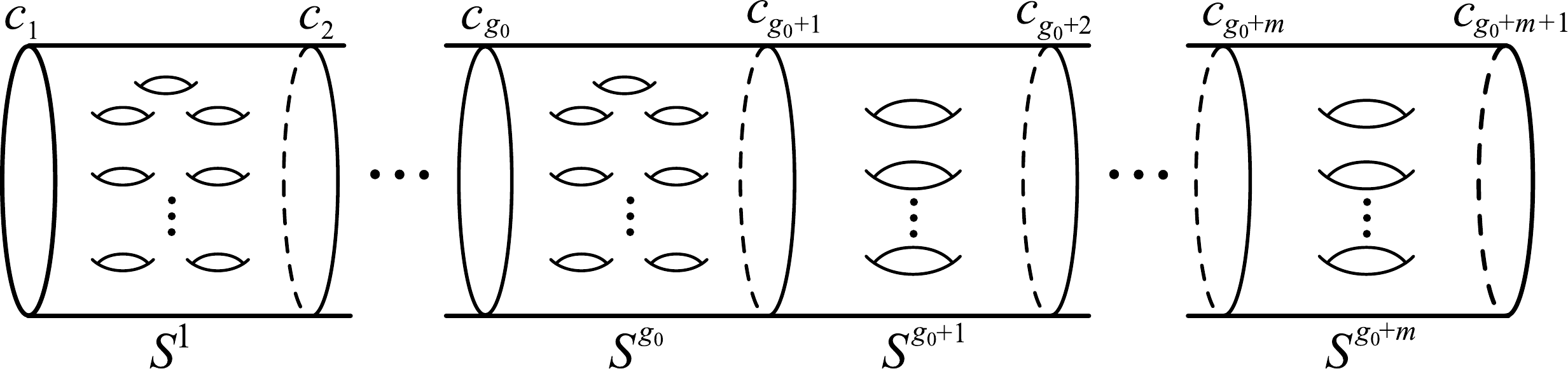}
				\caption{A decomposition of $N_{g+2} \bez c = N_{g,2}$ into $g_0$ subsurfaces $S_{n,2}$ and $m$ subsurfaces $S_{\frac{n-1}{2},2}$.}
				\label{fig:B_decomp_n} 
			\end{center}
		\end{figure} 
By \eqref{eq:genusB_n}, $S$ is an orientable surface of genus $g'$ and has  two boundary components. Let 
\[c_1,c_2,\ldots,c_{g_0},c_{g_0+1},\ldots,c_{g_0+m},c_{g_0+m+1}\]
be circles as in Figure \ref{fig:B_decomp_n}, that is $c_i$ and $c_{i+1}$ bound an oriented subsurface $S^i$ of genus $n$ if $i=1,\ldots,g_0$, and they bound an oriented subsurface $S^i$ of genus $\frac{n-1}{2}$ if $i=g_0+1,\ldots,g_0+m$.

By Corollary \ref{wn:trid:2}, for each $i=1,\ldots,g_0$, there exists a product $\delta_i$ of Dehn twists with support in $S^i$ and such that 
\[\delta_i^{n}=t_{c_i}t_{c_{i+1}}^{-1}.\]
Similarly, from Proposition \ref{chain} if follows that for each $i=g_0+1,\ldots,g_0+m$ there exists a product $\delta_i$ of Dehn twists with support in $S^i$ such that
\[\delta_i^{n}=t_{c_i}t_{c_{i+1}}.\]

We will need two more products $\delta$ and $\theta$ of Dehn twists. The first one is supported in $S^{g_0+m-1}\cup S^{g_0+m}$ and satisfies
\[\delta^n=t_{c_{g_0+m-1}}t_{c_{g_0+m+1}}\]
and the second one is supported in $S^{g_0+m}$ and satisfies
\[\theta^n=t_{c_{g_0+m}}t_{c_{g_0+m+1}}^{\frac{n-1}{2}}.\]
The existence of $\delta$ is a straightforward consequence of Corollary \ref{wn:trid:3} and to construct $\theta$ take $n=2g+1$ and glue a disk to $c_3$ in the statement of Proposition \ref{tw:trident}.

Thus if 
\[\zeta=\begin{cases}
        \delta_1\delta_2\cdots\delta_{g_0}\delta_{g_0+1}\delta_{g_0+2}^{-1}\delta_{g_0+3}\cdots \delta_{g_0+m-1}\delta_{g_0+m}^{-1}&\text{if $m\geq 0$ is even}\\
        \delta_1\delta_2\cdots\delta_{g_0}\delta_{g_0+1}\delta_{g_0+2}^{-1}\delta_{g_0+3}\cdots \delta_{g_0+m-3}^{-1}\delta_{g_0+m-2}\delta^{-1}&\text{if $m\geq 3$ is odd}\\
        \delta_1\delta_2\cdots\delta_{g_0}\theta&\text{if $m=1$,}\\
       \end{cases}
\]
then 
\[\zeta^n=\begin{cases}
          t_{c_1}t_{c_{g_0+m+1}}^{-1}&\text{if $m\neq 1$}\\
          t_{c_1}t_{c_{g_0+m+1}}^{\frac{n-1}{2}}&\text{if $m=1$.}
         \end{cases}
\]
Let $N_{2g'+2}=N_{g+2}$ be a closed surface obtained from $S=S_{g',2}$ by gluing $c_1$ to $c_{g_0+m+1}$ with such a choice of orientations that the resulting surface is nonorientable. The element $\xi\in\mathcal{M}(N_{g+2})$ induced by $\zeta\in\mathcal{M}(S_{g',2})$ satisfies 
\[\xi^n=\begin{cases}
       t_c^2&\text{if $m\neq 1$}\\
       t_c^{\frac{3-n}{2}}&\text{if $m=1$,}
      \end{cases}
\]
where $c$ is a circle in $N_{g+2}$ obtained from $c_1$ in $S_{g',2}$. Moreover $\xi$ commutes with $t_c$ and thus 
\[h=\begin{cases}
\xi^{\frac{n+1}{2}}t_c^{-1}&\text{if $m\neq 1$}\\
\xi^{\frac{n+2}{3}}t_c^{\frac{n-1}{6}}&\text{if $m=1$ and $n=1\mod 3$}\\
\xi^{\frac{2n+2}{3}}t_c^{\frac{n-2}{3}}&\text{if $m=1$ and $n=2\mod 3$}
    \end{cases}
\]
is a desired root of degree $n$ of $t_c$. Corresponding data sets are as follows
%
\[\begin{cases}
(n,g_0,(2,-2);(-4,n),(4,n),(-4,n),\ldots,(-4,n), (4,n))&\text{\begin{tabular}{@{}l@{}}if $m \geq 0$ \\ is even,\end{tabular}}\\
(n,g_0,(2,-2);(-4,n),(4,n),(-4,n),\ldots,(-4,n), (2,n),(2,n))&\text{\begin{tabular}{@{}l@{}}if $m \geq 3$ \\ is odd,\end{tabular}}\\
\left(n,g_0,\left(\frac{n+3}{2},-3\right);\left(\frac{n+3}{2},n\right)\right) &\text{\begin{tabular}{@{}l@{}}if $m = 1$. \\\end{tabular}}\\
\end{cases}
\]

As for the missing case of $m=1$ and $n=0\mod 3$, recall that we observed in the proof of Theorem \ref{tw:allrots:b} that there are no such primary roots of type B (see also Lemma \ref{Lem:tw:b:max2}).

\end{document}